\NeedsTeXFormat{LaTeX2e}
\documentclass[11pt]{amsart}
\usepackage[T1]{fontenc}

\usepackage{lmodern, amsfonts,amsmath,amstext,amsbsy,amssymb,
amsopn,amsthm,upref,eucal,mathptmx}

\usepackage{mathrsfs}

\RequirePackage{xcolor} 
\definecolor{halfgray}
{gray}{0.55}
\definecolor{webgreen}
{rgb}{0,0.4,0}
\definecolor{webbrown}
{rgb}{.8,0.1,0.1}
\definecolor{red}
{rgb}{1,0,0}
\usepackage{microtype}

\newcommand \R {{ \mathbb R}}
\def\C{{\mathbb C}}
\newcommand \Z {{ \mathbb Z}}
\newcommand \N {{ \mathbb N}}
\newcommand \T {{ \mathbb T}}

\newtheorem{theorem}{Theorem}[section]
\newtheorem {lemma} [theorem]{Lemma}

\newtheorem{corollary}[theorem]{Corollary}
\newtheorem{remark}[theorem]{Remark}

\newtheorem{definition}[theorem]{Definition}

\newtheorem{addendum}[theorem]{Addendum}

\title[On equidistribution of unstable curves for pseudo-Anosov diffeomorphisms  ]%
{ On the equidistribution of unstable curves for pseudo-Anosov diffeomorphisms  of compact surfaces}

  \author{Giovanni Forni}

\address{Department  of Mathematics\\
  University of Maryland \\
  College Park, MD USA}
  
\email
    {gforni@math.umd.edu}
\keywords
      { Anosov diffeomorphisms,  Deviation of ergodic averages for invariant foliations, Ruelle asymptotics.}
\subjclass[2010]
        {37D20, 37A25, 37C30}

\date{\today}

 \begin{document}

\def\echo#1{\relax}
    
\begin{abstract}
\begin{sloppypar}
We prove that the asymptotics of ergodic integrals along an invariant foliation of a toral Anosov 
diffeomorphism, or of a pseudo-Anosov diffeomorphism on a compact orientable surface of higher genus,
are determined (up to a logarithmic error)  by the action of the diffeomorphism on the cohomology of the surface. As a consequence of our argument and
of the results of Giulietti and Liverani \cite{GL} on horospherical averages, toral Anosov diffeomorphisms
have no  Ruelle resonances in the open interval $(1, e^{h_{top}})$.
\end{sloppypar}
\end{abstract}
\maketitle

\section{Introduction}

In this note we prove that the asymptotics of the equidistribution of unstable (or stable) curves for 
any $C^r$ ($r>1$) pseudo-Anosov diffeomorphism of a compact surface is entirely determined by
the action of the diffeomorphism on the first cohomology (or homology) group  up to a logarithmic error.
This work was motivated by the question on whether non-trivial resonances, in the interval
$(1, e^{h_{top}})$,  do appear in the asymptotics of ergodic integrals of Giulietti and Liverani \cite{GL} 
for Anosov diffeomoprhisms of the torus and in the spectrum of the relevant transfer operator.  By comparing 
our asymptotics with that of Giulietti and Liverani \cite{GL}, we conclude that no such non-trivial resonance exists. A direct, self-contained proof that there are no non-trivial resonances for the transfer operator has
been given simultaneously and independently by V.~Baladi \cite{Ba}. Her proof was an additional motivation to write up  the argument presented below. 

The argument is inspired by the author's proof \cite{F02} of deviation of ergodic averages for generic (almost all) translation flows on higher genus surfaces. Here we only deal with the special case of unstable foliations of diffeomorphisms, but we do not assume that the diffeomorphism is volume preserving. For this reason we 
work in H\"older spaces instead of Sobolev $L^2$ spaces (with respect to the invariant volume).  A complete
description of Ruelle resonances, as well as a complete asymptotics of ergodic averages and results on cohomological equations for linear pseudo-Anosov maps, has been given recently in the paper by Faure, 
Gou\"ezel and Lanneau \cite{FGL}. 
Our argument gives a simplified proof of the part of their result concerning the Ruelle
resonances in the interval $(1, e^{h_{top}})$ and the deviation of ergodic averages up to a logarithmic error. Our
argument also extends to the ``non-linear'' case, which to the best of our knowledge has not been studied so far.

\medskip

Let $A:M\to M$ be an orientation-preserving  pseudo-Anosov diffeomorphism of class $C^r$ for
any $r >1$ of a compact surface $M$, finite set of fixed points (singularities) at $\Sigma\subset M$, not necessarily linear (with respect to a translation structure
on $M$), but topologically conjugated to a linear pseudo-Anosov.

Let $E^+ \subset H^1(M, \C)$ and $E^-  \subset H^1(M, \C)$ denote respectively  the unstable and the stable spaces
of the finite dimensional linear map $A^\#: H^1(M, \C) \to  H^1(M, \C)$ induced by $A$ on cohomology. Let $\{\mu_1, \dots, \mu_{2s}\}\subset \C$  denote its spectrum with
$$
\mu_1=\lambda > \vert \mu_2 \vert \geq  \dots \geq \vert \mu_k\vert > \vert \mu_{k+1}\vert = \dots  \vert \mu_{s}\vert =1\,.
$$
Let $J_1 (=1), \dots, J_k \in \N\setminus \{0\}$ denote the geometric multiplicities of the expanding eigenvalues 
$\mu_1=\lambda >\mu_2 \geq \dots \geq \mu_k$ or, by symmetry, of the contracting eigenvalues $\mu_{2g}= \lambda^{-1} < \mu_{2g-1} \leq \dots \leq \mu_{2g-k+1}$ of the linear map $A^\#: H^1(M, \C) \to H^1(M,\C)$ and let 
$$\{\mathcal C^\pm_{i,j} \vert  i\in \{1,\dots, k\}, \,  j \in \{1, \dots, J_i\} \}$$  denote Jordan bases of the spaces $E^\pm\subset H^1(M,\C)$ respectively.  

Let $J^0_A$ denote the maximal geometric multiplicity of eigenvalues of $A$
on the unit circle, that is, of eigenvalues $\mu_{k+1}, \dots, \mu_{2g-k}$ of $A\vert E^0$.

Let $\mathcal L^\pm$  denote the conditional measures of the
Margulis measure (measure of maximal entropy) along the leaves of the unstable and, respectively,
stable foliations $\mathcal F^+$ and $\mathcal F^-$. For any $(x, \mathcal L) \in M\times \R^+$, let $\gamma_{\mathcal L}(x) \subset M\setminus \Sigma$ denote an unstable 
curve with initial point $x \in M$ and unstable Margulis ``length'' $\mathcal L^ +(  \gamma_{\mathcal L}(x)   ) 
= \mathcal L$ (whenever is exists). Since $A$ is topologically conjugated by assumption to a linear pseudo-Anosov diffeomorpshim, there exists
a set of full measure (with respect to the Margulis measure) of $x\in M$ such that $\gamma_{\mathcal L}(x) $ is
well-defined for all $\mathcal L>0$.
 
\subsection{Deviation of ergodic averages: invariant foliations}

Let  ${\mathcal Z}^{-1}(M)$ denote the subspace of closed currents of the space of currents of dimension $1$ (and degree $1$) dual to the space $\Omega^1(M)$ of $1$-forms of class $C^1$.  

\begin{theorem} 
\label{thm:Deviations} There exist injective maps ${\mathcal B}^\pm : E^\pm \to {\mathcal Z}^{-1}(M) \subset \Omega^1(M)^\ast$ into the subspace  ${\mathcal Z}^{-1}(M)$ of closed currents (of degree and dimension~$1$)  dual to the space $\Omega^1(M)$  of differential $1$-forms of class $C^1$ on $M$ such that
$$
A_* \circ {\mathcal B}^\pm =  {\mathcal B}^\pm  \circ  A^\#   \quad \text{ on}\quad  E^\pm \subset H^1(M, \C)\,.
$$
For $i\in \{1,\dots, k\}$ and $j \in \{1, \dots, J_i\}$ let us adopt the notation
$$
B^\pm_{i, j} :=  {\mathcal B}^\pm ( \mathcal C^\pm_{i, j} ) \in \Omega^{1}(M)^\ast \,.
$$
 The currents $B^+_1:= {\mathcal B}^+(\mathcal C^+_{1,1})$ and $B^-_1:= {\mathcal B}^-(\mathcal C^-_{1,1})$ and  can be explicitly written as follows: let $\mathcal M^+$ and $\mathcal M^-$ denote the unstable and the stable Margulis measures. For any  $1$-form $\alpha$ of class $C^1$  on $M$ we have
\begin{equation}
\label{eq:Margulis_currents}
B^+_1 (\alpha ) = \int_M  \mathcal M^- \otimes \alpha  \quad \text{ and } \quad B^-_1 (\alpha ) = \int_M \alpha \otimes \mathcal M^ + \,.
\end{equation}
There exists a constant $C>0$ such that the following hods. For any unstable curve $\gamma_{\mathcal L}(x)$  of initial point $x\in M$ and unstable length $\mathcal L>1$, there exists a set of uniformly bounded coefficients 
$$\{c_{i, j} (x, \mathcal L) \vert i\in \{2, \dots k\}, \, j\in \{1, \dots, J_i\} \}$$
such that, for any differential $1$-form $\eta \in C^1(M)$, we have 
\begin{equation}
\label{eq:deviations}
\begin{aligned}
\vert  \int_{\gamma_{\mathcal L}(x)} \eta - \mathcal L B^+_1(\eta) -  &\sum_{i=2}^k \sum_{j=1}^{J_i} 
c_{i,j} (x, \mathcal L) B_{i, j} ^+(\eta)  (\log \mathcal L)^{j-1} \mathcal L^{ \frac{ \log \vert \mu_i\vert}{h_{top}(A)}}  \vert  \\  &\leq C \Vert \eta \Vert_{C^1(M)}  [\log (1+ \mathcal L) ]^{\max(J^0_A,1) +1}\,.
\end{aligned}
\end{equation}
In addition, there exists $c>0$ such that for every $x\in M$ and for every $i\in\{2, \dots, k\}$ and $j\in \{1, \dots, J_i\}$, there exists a sequence $\mathcal L_n:=\mathcal L^{(i,j)}_n(x)$ such that 
\begin{equation}
\label{eq:lower_bounds}
\inf_{n\in \N} \vert  c_{i,j} (x, \mathcal L_n)  \vert \geq c  \,.
\end{equation}
\end{theorem}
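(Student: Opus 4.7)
The plan is to follow the strategy of the author's earlier work \cite{F02} on translation flows, adapted from the $L^2$-Sobolev setting (which used an invariant volume) to a H\"older setting. The overall architecture is: construct for each expanding eigenvalue of $A^\#$ a closed basic current $B^+_{i,j}$ realizing the corresponding generalized eigenvector, then exploit the identity $\int_{\gamma_{\mathcal L}(x)}\eta=\int_{A^{-n}\gamma_{\mathcal L}(x)}(A^n)^*\eta$ to reduce the long-curve asymptotics to the Jordan normal form of $A^\#$ acting on the cohomology class $[\eta]$.

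I would first construct the basic currents. The leading currents $B^\pm_1$ are given by formula \eqref{eq:Margulis_currents}; the eigenequation $A_*B^+_1=\lambda B^+_1$ follows from $A_*\mathcal M^-=\lambda^{-1}\mathcal M^-$, and closedness reduces to holonomy invariance of $\mathcal M^-$ along unstable leaves. For the subdominant expanding eigenvalues $\mu_i$ I would work inside the space of $C^1$-dual closed currents obtained by integrating along unstable leaves against H\"older transverse cocycles of type $\mu_i$, and find a Jordan basis $(B^+_{i,j})_{j=1}^{J_i}$ for the $A_*$-action; injectivity of $\mathcal B^+$ then follows because each $B^+_{i,j}$ represents the nonzero cohomology class $\mathcal C^+_{i,j}$.

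The dynamical step: given $\gamma_{\mathcal L}(x)$ with $\mathcal L>1$, choose $n:=\lfloor\log\mathcal L/h_{top}(A)\rfloor$, so that $\sigma:=A^{-n}\gamma_{\mathcal L}(x)$ has Margulis length in $[1,\lambda)$, and expand $(A^n)^*\eta$ in a Jordan basis of $A^\#$. The generalized eigenvector along $\mathcal C^+_{i,j}$ contributes a term of size $\mu_i^n n^{j-1}=\mathcal L^{\log|\mu_i|/h_{top}}(\log\mathcal L)^{j-1}$, paired against $B^+_{i,j}(\eta)$; the neutral part $E^0$ contributes oscillating terms of polynomial growth at most $(\log\mathcal L)^{J^0_A}$, and the contracting part $E^-$ decays. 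To pass from the cohomological equality to an integral identity over $\sigma$, I would write $(A^n)^*\eta$ as a finite linear combination of closed forms dual to the Jordan basis plus an exact form $dF_n$, pair with $\sigma$ (which is short), and bound $F_n$ in $C^0$ by $\|\eta\|_{C^1}$ uniformly in $n$, up to one extra $\log\mathcal L$ factor, which accounts for the final error $(\log\mathcal L)^{\max(J^0_A,1)+1}$.

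The hardest step will be the uniform $C^0$-control of the primitive $F_n$. In \cite{F02} this is accomplished by $L^2$ Hodge theory relative to the invariant volume, which is unavailable here. Instead I would build $F_n$ by hand: invert $A^\#-\mu_i$ on the corresponding generalized eigenspace and assemble a local primitive using a Markov partition compatible with the pseudo-Anosov structure, with particular care near $\Sigma$, where the atomless character of the Margulis measures localizes the contribution. The coefficients $c_{i,j}(x,\mathcal L)$ are uniformly bounded by construction. Finally, the lower bound \eqref{eq:lower_bounds} follows from injectivity of $\mathcal B^+$ combined with the renormalization identity $\gamma_{\lambda \mathcal L}(x)=A\gamma_{\mathcal L}(A^{-1}x)$: if $c_{i,j}(x,\mathcal L)$ were to tend to $0$ along every $\mathcal L\to\infty$, the Jordan term would fall below the error in \eqref{eq:deviations} and, by iterating $A$ and rescaling, would force $B^+_{i,j}$ to be cohomologous to a combination of currents associated with strictly larger eigenvalues, contradicting the Jordan structure.
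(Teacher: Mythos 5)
Your overall plan (build $\mathcal B^\pm$ as closed basic currents realizing the Jordan basis, then rescale to read off the Jordan growth) agrees in outline with the paper, but the central dynamical step has a genuine gap. Setting $\sigma := A^{-n}\gamma_{\mathcal L}(x)$ with $n=\lfloor\log\mathcal L/h_{top}\rfloor$ and writing $(A^n)^*\eta=\alpha((A^\#)^n[\eta])+dF_n$, you need $\|F_n\|_{C^0}\lesssim \|\eta\|_{C^1}\log\mathcal L$, but this is false. From $A^*\alpha=\alpha A^\#+du$ one gets
$F_n=\sum_{k=0}^{n-1}(A^*)^ku\big((A^\#)^{n-1-k}[\eta]\big)$, and since $(A^*)^k$ is a $C^0$-isometry while $(A^\#)^{n-1-k}[\eta]^+$ grows like $\lambda^{n-1-k}$, one has $\|F_n\|_{C^0}\asymp\lambda^n=\mathcal L$ whenever $[\eta]^+\neq 0$. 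Dually, closing up $\sigma$ by a bounded arc $I$ joining its endpoints produces $A^n_*I$, a curve of length up to $\lambda^n$, so the cohomological estimate for the cycle $\sigma\cup I$ does not control the arc $\gamma_{\mathcal L}(x)$. No Markov-partition patch repairs this; the obstruction is intrinsic to the single-scale scheme. The paper's essential idea is the multi-scale closest-return decomposition (Lemma~\ref{lemma:decomposition}): $\bar\gamma$ is chopped into at most $C\lambda/c$ closest-return loops at each scale $\ell=1,\dots,n$, plus a bounded remainder, and each such loop closes up along a \emph{stable} segment, which contracts under $A$. One then applies, at each scale, the estimate $\Vert A^\ell_*\gamma-\mathcal B^+((A^\#)^\ell[\gamma]^+)\Vert_{-1}\leq C\Vert[\gamma]\Vert\,\ell^{\max(J^0_A,1)}$ of Lemma~\ref{lemma:closed_currents} (proved via the recursion $U_{\ell+1}=F(X^0(\ell))+A^*U_\ell$ and $\Vert A^*U\Vert_{C^0(M)^*}=\Vert U\Vert_{C^0(M)^*}$), and sums over the $\sim\log\mathcal L$ scales. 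The extra $+1$ in the exponent of the error $[\log\mathcal L]^{\max(J^0_A,1)+1}$ is precisely the number of scales; your single-scale scheme cannot produce it.

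Two smaller divergences. For the construction of $\mathcal B^\pm$ the paper does not use H\"older transverse cocycles; it sets $\mathcal B^+(C):=\lim_{n\to\infty}(A^*)^n\alpha((A^\#)^{-n}C)$ for a fixed linear choice $\alpha:H^1(M,\C)\to Z^1(M)$ of $C^1$ closed-form representatives, with convergence coming from $(A^\#)^{-1}$ contracting $E^+$ while $(A^*)^n$ is a $C^0$-isometry on functions. This telescoping construction is what lets the paper handle general non-linear $C^r$ pseudo-Anosov maps, whereas the cocycle picture implicitly leans on an affine transverse structure. For the lower bound~\eqref{eq:lower_bounds}, the paper does not argue by contradiction from renormalization: it uses a non-vanishing statement (first-return loops to a transversal span $H_1(M\setminus\Sigma,\Z)$, so by Poincar\'e duality there is a subinterval $J\subset I$ whose return loops have non-zero Jordan projection), then observes that for an arbitrary $x$ either the loop $\gamma'(x)$ or $\gamma''(x)=\gamma'(x)+\gamma_I(x')$ has non-zero projection. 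Your contradiction sketch would need a precise mechanism to turn ``$c_{i,j}(x,\mathcal L)\to 0$ for one $x$'' into a cohomological relation among the $B^+_{i,j}$; as stated it does not close.
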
 
From the above asymptotic result of formula~\eqref{eq:deviations}, together with the lower bound of formula
\eqref{eq:lower_bounds}, we can derive {\it a posteriori} additional invariant properties of the closed currents
$B^\pm_1$, $B^\pm_{i,j}$ which are not apparent from their construction. In order to state these invariance properties we recall the following:
\begin{definition}  \label{def:basic}  A current $B$ is called \emph{basic} for a foliation $\mathcal F$ on a smooth
manifold  if, for all vector fields $Y$ tangent to $\mathcal F$,
$$
\mathcal L_Y B =  \imath_Y  B=0 \,.
$$
 If the current has dimension $1$, by the identity 
$\mathcal L_Y B =  \imath_Y  dB + d\imath_Y  B$ it follows that $B$  is basic if and 
only 
$$
d B =  \imath_Y  B=0\,.
$$
\end{definition}

\begin{addendum} 
\label{addendum:basic} The currents $B^\pm_1$ and $B^\pm_{i,j}$ for $i\in \{2, \dots, k\}$, $j\in \{1, \dots, J_i\}$,
of Theorem~\ref{thm:Deviations} are basic for the unstable, respectively, stable, foliations $\mathcal F^\pm$
on $M\setminus \Sigma$.
\end{addendum}
 
The asymptotic expansion of Theorem~\ref{thm:Deviations} can be refined by introducing finitely additive functionals on rectifiable arcs, following the work of A.~Bufetov~\cite{Bu14} on translation flows (see also \cite{BuFo14} on horocycle flows and \cite{FoKa} on nilflows). 

\begin{theorem}
\label{thm:Functionals} Let $\Gamma_r$ the set of all rectifiable paths (considered as a subset of the space of 
currents). There exists a  map $\hat \beta^+: \Gamma_r \to   \mathcal B^+(E^+)\subset \mathcal Z^{-1}(M)$ into the space of closed currents (with image in the space of basic currents for the unstable foliation) such that the following holds. The map $\hat \beta^+$ has the following properties:
\begin{enumerate}
\item (Additive property)   For any decomposition $\gamma= \gamma_1 +\gamma_2$ into subarcs,
\begin{equation*}
\hat \beta^+(\gamma)= \hat \beta^+(\gamma_1) +
\hat\beta^{+}(\gamma_2) \,;
\end{equation*}
\item (Scaling)  For any $\gamma \in \Gamma_r$, we have
\begin{equation*}
\hat\beta^{+}(A\gamma ) =  A_\ast \hat\beta^{+}(\gamma)  \,,
\end{equation*}
\item (Stable holonomy invariance)  For all pair of arcs $\gamma_1$, $\gamma_2\in \Gamma_r$ 
equivalent under the stable holonomy, we have
\begin{equation*}
\hat\beta^{+} (\gamma_1) =  \hat \beta^{+}(\gamma_2)\,.
\end{equation*}
\end{enumerate}
In addition, the functional $\hat \beta^+$ has the following asymptotic property: there exists a constant $C>0$
such that, for every arc $\gamma \in \Gamma_r$ we have
$$
\Vert  \gamma -  \hat \beta^+ (\gamma) \Vert_{-1} \leq C (1+\mathcal L^- ( \gamma))  [\log (1+\mathcal L^+ ( \gamma))]^{\max(J^0_A,1)+1}\,.
$$
\end{theorem}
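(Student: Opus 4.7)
The construction mimics Bufetov's finitely additive H\"older cocycles \cite{Bu14}, transplanted to our pseudo-Anosov setting, using Theorem~\ref{thm:Deviations} and Addendum~\ref{addendum:basic} as the source of the relevant basic currents. The plan is to first build $\hat\beta^+$ on long unstable arcs by reading off its value from the deviation expansion, verify the three structural properties on that class, and then propagate the definition to all rectifiable arcs by a renormalization argument using the hyperbolicity of $A$.

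First, for an unstable arc $\gamma=\gamma_{\mathcal L}(x)$ with $\mathcal L>1$, set
\[
\hat\beta^+(\gamma) \;:=\; \mathcal L\, B^+_1 \;+\; \sum_{i=2}^{k}\sum_{j=1}^{J_i} c_{i,j}(x,\mathcal L)\,(\log \mathcal L)^{j-1}\, \mathcal L^{\log|\mu_i|/h_{top}(A)}\, B^+_{i,j},
\]
where the $c_{i,j}(x,\mathcal L)$ are the coefficients furnished by Theorem~\ref{thm:Deviations}. By the injectivity of $\mathcal B^+$ the currents $B^+_{i,j}$ are linearly independent, and the lower bound \eqref{eq:lower_bounds} combined with the error estimate \eqref{eq:deviations} shows that the coefficients in any such asymptotic decomposition are determined by $\gamma$ alone up to an error that is negligible on the $B^+_{i,j}$ scale. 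This uniqueness yields additivity on concatenations of unstable arcs, while scaling is a direct consequence of the intertwining $A_*\circ\mathcal B^+=\mathcal B^+\circ A^\#$ combined with the fact that $A(\gamma_{\mathcal L}(x))=\gamma_{\lambda\mathcal L}(Ax)$.

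Next, for stable holonomy invariance on unstable arcs: if $\gamma_1, \gamma_2$ are two unstable arcs of equal Margulis length connected by a stable holonomy, then for every $\eta\in C^1(M)$ the difference $\int_{\gamma_1}\eta - \int_{\gamma_2}\eta$ can be estimated by integrating $\eta$ along a thin strip of exponentially shrinking stable width, giving a bound uniform in $\mathcal L$. By the uniqueness discussed above, the coefficients $c_{i,j}$ must coincide, so $\hat\beta^+(\gamma_1)=\hat\beta^+(\gamma_2)$. To extend $\hat\beta^+$ to arbitrary $\gamma\in\Gamma_r$, I would fix a Markov partition for $A$ and, for a given $\gamma$, iterate $A^{-n}$ a bounded number of times so that $\gamma$ becomes small; then approximate it by a finite concatenation of short unstable and stable segments inside Markov rectangles, define $\hat\beta^+$ on stable segments to be zero (consistent with stable holonomy invariance: they are holonomy-equivalent to a point), and finally apply $A^n_*$ together with the scaling property to return to $\gamma$. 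Additivity, scaling and holonomy invariance then pass to this extension by construction.

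Finally, the asymptotic estimate $\|\gamma-\hat\beta^+(\gamma)\|_{-1}\le C(1+\mathcal L^-(\gamma))[\log(1+\mathcal L^+(\gamma))]^{\max(J^0_A,1)+1}$ is obtained by pairing $\gamma-\hat\beta^+(\gamma)$ with a test $1$-form $\eta\in C^1(M)$: along each unstable sub-arc of length $\mathcal L_i$ the estimate \eqref{eq:deviations} gives a $\log$-power error, while each stable sub-arc contributes at most $\|\eta\|_{C^1}$ times its stable length, and these stable lengths sum to $\mathcal L^-(\gamma)$ after compensating the expansion/contraction in the renormalization step. The main obstacle is the last paragraph: one needs a decomposition of arbitrary rectifiable arcs into unstable/stable pieces that is compatible with the intertwining relations and yields the asserted uniform constant $C$ independent of $\gamma$. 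This requires a careful Markov-rectangle argument that tracks both the bounded distortion of the holonomies and the polynomial growth of the number of pieces under renormalization, so that no extra $\log$-factor beyond $\max(J^0_A,1)+1$ creeps in from the stable direction.
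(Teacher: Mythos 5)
Your approach differs fundamentally from the paper's, and I think there is a genuine gap in it. You propose to define $\hat\beta^+$ on an unstable arc $\gamma_{\mathcal L}(x)$ directly in terms of the coefficients $c_{i,j}(x,\mathcal L)$ supplied by Theorem~\ref{thm:Deviations}, and then to deduce exact additivity and exact stable holonomy invariance from a ``uniqueness'' of those coefficients. But Theorem~\ref{thm:Deviations} only determines the coefficients up to an additive error of size $[\log(1+\mathcal L)]^{\max(J^0_A,1)+1}$; the lower bound~\eqref{eq:lower_bounds} concerns a sequence of scales, not uniqueness. This is enough for \emph{asymptotic} uniqueness as $\mathcal L\to\infty$ (since all $|\mu_i|>1$ for $i\le k$), but the additive and holonomy-invariance identities in Theorem~\ref{thm:Functionals} are \emph{exact}, not asymptotic. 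If $\gamma=\gamma_1+\gamma_2$, the closest-return decompositions of $\gamma_1$, $\gamma_2$ and $\gamma$ used in Theorem~\ref{thm:asymptotic} bear no exact relation to one another, and nothing in your argument forces $\hat\beta^+(\gamma)-\hat\beta^+(\gamma_1)-\hat\beta^+(\gamma_2)$ to vanish rather than just to be small. The same issue breaks your holonomy-invariance argument, which you also run only up to an error.

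The paper avoids exactly this problem by a renormalization device: for an unstable arc $\bar\gamma$ one sets
$$
\beta^+(\bar\gamma):=\lim_{k\to\infty} A_*^{-k}\,\mathcal B^+\!\left((A^\#)^{[\log(\mathcal L^+(A^k\bar\gamma))/h_{top}(A)]}\bigl(\mathcal C^+(A^k\bar\gamma)\bigr)\right),
$$
where $\mathcal C^+$ is the explicit cohomology-class functional of formula~\eqref{eq:Cplus} (not the coefficients $c_{i,j}$). Using $A_*\circ\mathcal B^+=\mathcal B^+\circ A^\#$ this reduces to the convergence of $\mathcal C^+(A^k\bar\gamma)$, which one gets for free from the closest-return decomposition because $(A^\#)^{-1}$ is a strict contraction on $E^+$ and the individual cohomology classes $[\gamma(A^{-\ell}(x_{\ell,m}))]$ are uniformly bounded. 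Passing to the limit kills the boundary errors that obstruct exact additivity; the scaling relation is literally a reindexing of the same limit, and stable holonomy invariance is read off from the fact that $\mathcal C_\infty$ depends on $\bar\gamma$ only through its closest-return decomposition. For the extension to arbitrary $\gamma\in\Gamma_r$ the paper does not need Markov partitions at all: it decomposes $\gamma$ into finitely many pieces each equivalent under stable holonomy to an unstable arc, sets $\hat\beta^+$ to be additive over the pieces, and this is immediately seen to be consistent; the stable contribution then only enters the final estimate through the factor $1+\mathcal L^-(\gamma)$, whereas your Markov-rectangle renormalization would have to control the number of pieces under iteration and justify independence of the partition and of the depth $n$, which you correctly flag but do not resolve. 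If you want to salvage your construction, the fix is precisely the passage to the renormalized limit: replace the coefficients at a fixed scale by the limit of the pulled-back currents, and the exact identities become tautological.
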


\subsection{Deviation of ergodic averages:unstable (horocyclic) flows}

Let us then assume that the tangent map $DA$ of the pseudo-Anosov diffeomorphism preserves the orientation
of the unstable foliation of a pseudo-Anosov map $A$ of class $C^r$  with $r>1+\alpha$ and, as in~\cite{GL}, let $X$ denote a vector field of class $C^{1+\alpha}$, tangent to the unstable foliation and normalized to have constant norm with respect to a fixed Riemannian metric on $M$.  By definition, for all $n \in \N$ there  exists
a function $\nu_n: M\setminus \Sigma \to \R$ such that  (see \cite{GL}, formula (1.3)) 
$$
D_x A^n (X_x)=   \nu_n (x) X_{A^n(x)}  \,, \quad \text{ for all } x \in M\setminus \Sigma\,.
$$
From Theorem~\ref{thm:Deviations} we derive a result on the asymptotics of ergodic integrals for the flow
$h^X_\R$ generated by the unstable vector field $X$ on $M\setminus \Sigma$. 

For linear pseudo-Anosov maps on higher genus surfaces, a sharper asymptotics of ergodic integrals of the stable and unstable translation flows was obtained by F.~Faure, S.~Gou\"ezel and E.~Lanneau  \cite{FGL}, who also proved complete
results on the existence and regularity of solutions of the cohomological equation. 

For the case of toral Anosov diffeomorphisms the asymptotics of ergodic integrals of stable and unstable vector fields was studied in the pioneering  work of P.~Giulietti and C.~Liverani \cite{GL}. V. Baladi~\cite{Ba} has given a proof, independent of ours, that in the for toral case there are no ``deviation resonances''  in the Giulietti-Liverani asymptotics (see Remark \ref{rem:Deviations} below). Her argument also proves that for the stable or unstable vector fields of sufficiently regular Anosov  diffeomorphisms every zero average function is a continuous coboundary, a result which is beyond the reach of our cohomological approach.

Let $\hat X$ be a $1$-form of class $C^{1+\alpha}$ dual to the vector field $X$, in the sense that 
$$
\hat X (X) \equiv 1  \quad \text{ on } \,\,M\setminus \Sigma\,.
$$
The unique invariant probability measure $\mu_X$ of the flow $h^X_\R$ is given by the condition
$$
\mu_X \in \R^+ ( B^+_1 \wedge \hat X)   \quad  \left(  \text{defined as } \int_M f d\mu_X   = \frac{B^+_1 (f \hat X)}
{B^+_1 (\hat X)} \,, \text{ for all } f \in C^1(M) \right)\,.
$$
Let $\{D^X_{i,j} \vert i\in \{2, \dots, k\}, j \in \{1, \dots, J_i\} \}$ the finite set of distributions 
$$
D^X_{i,j} := B^+_{i,j}  \wedge \hat X \,, \qquad \text{ for all }   i\in \{2, \dots, k\}, j \in \{1, \dots, J_i\}\,.
$$
It follows from Addendum~\ref{addendum:basic} (see Section \ref{sec:RAsymptotics})  that, for all $i\in \{2, \dots,k\}$ and $j\in \{1, \dots, J_i\}$, the distributions  $D^X_{i,j}$ are $X$-invariant, in the sense
that 
$$
X D^X_{i,j} =0  \quad \text{ in }  \,\, \mathcal D' (M\setminus \Sigma).
$$
The following asymptotic expansion of ergodic integrals holds.
\begin{corollary}
\label{cor:Deviations}
 There exist a constant $C_X>0$ and, for all $(x,T) \in (M\setminus\Sigma) \times \R^+$, a finite set of uniformly bounded coefficients $$\{ c^X_{i,j} (x,T) \vert  i\in \{2, \dots, k\}, j \in \{1, \dots, J_i\} \}\,, $$   such that,  for all $f\in C^1(M)$ and for all $(x,T) \in 
 (M\setminus \Sigma)\times \R^+$ with  $h_{[0,T]} (x) \subset M\setminus \Sigma$, we have
 \begin{equation}
\label{eq:deviations_erg}
\begin{aligned}
\vert  \int_0^T  f \circ h^X_t(x) dt  - T  \int_M f d \mu_X  -  &\sum_{i=2}^k \sum_{j=1}^{J_i} 
c^X_{i,j} (x, T) D_{i, j} ^X(f)  (\log T)^{j-1} T^{ \frac{ \log \vert \mu_i\vert}{h_{top}(A)}}  \vert  \\  &\leq C_X \Vert f\Vert_{C^1(M)}  [\log (1+ T) ]^{\max(J^0_A,1) +1}\,.
\end{aligned}
\end{equation}
 In addition, there exists $c>0$ such that for every $x\in M$ and for every $i\in\{2, \dots, k\}$ and $j\in \{1, \dots, J_i\}$, there exists a sequence $T_n:=T^{(i,j)}_n(x)$ such that 
\begin{equation}
\label{eq:lower_bounds_erg}
\inf_{n\in \N} \vert  c_{i,j} (x, T_n)  \vert \geq c  \,.
\end{equation}

\end{corollary}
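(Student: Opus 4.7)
The plan is to deduce Corollary~\ref{cor:Deviations} from Theorem~\ref{thm:Deviations} by viewing an orbit segment $h^X_{[0,T]}(x)\subset M\setminus\Sigma$ as an unstable curve and testing the theorem against $1$-forms of the form $f\hat X$. Since $\hat X(X)\equiv 1$ on $M\setminus\Sigma$, for any $f\in C^1(M)$
$$
\int_0^T f\circ h^X_t(x)\,dt \;=\; \int_{h^X_{[0,T]}(x)} f\,\hat X\,,
$$
so the Birkhoff integral is recast as a line integral of a $C^1$ form, to which~\eqref{eq:deviations} applies directly as soon as the Margulis length $\mathcal L:=\mathcal L^+(h^X_{[0,T]}(x))$ of the orbit segment is related to the flow time $T$.

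The next step is to derive that relation by applying~\eqref{eq:deviations} with $\eta=\hat X$, yielding
$$
T \;=\; \mathcal L\,B^+_1(\hat X) \,+\, \sum_{i=2}^k\sum_{j=1}^{J_i} c_{i,j}(x,\mathcal L)\,B^+_{i,j}(\hat X)\,(\log\mathcal L)^{j-1}\mathcal L^{\log|\mu_i|/h_{top}(A)} \,+\, O\bigl((\log\mathcal L)^{\max(J^0_A,1)+1}\bigr)\,.
$$
By formula~\eqref{eq:Margulis_currents} and the positivity of $\hat X$ on the unstable direction, $B^+_1(\hat X)>0$, so this expansion can be inverted, first as $\mathcal L = T/B^+_1(\hat X) + O(T^{\log|\mu_2|/h_{top}(A)}(\log T)^{J_2-1})$ and then recursively through the finite hierarchy of exponents. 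Substituting the inverted relation back into~\eqref{eq:deviations} with $\eta = f\hat X$, and using the identity $\mu_X(f)=B^+_1(f\hat X)/B^+_1(\hat X)$, produces the leading term $T\mu_X(f)$ together with deviation terms in which $B^+_{i,j}(f\hat X)=D^X_{i,j}(f)$ appears with the required prefactor $(\log T)^{j-1}T^{\log|\mu_i|/h_{top}(A)}$. The substitution errors at each level are strictly subordinate to the current exponent and can be absorbed either by a lower term of the hierarchy or by the final logarithmic remainder.

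The $X$-invariance $XD^X_{i,j}=0$ in $\mathcal D'(M\setminus\Sigma)$, promised just before the statement, will follow from Addendum~\ref{addendum:basic} via Cartan's identity: since $\iota_X B^+_{i,j}=dB^+_{i,j}=0$ on $M\setminus\Sigma$ one has $\mathcal L_X B^+_{i,j}=0$ as a $1$-current, and testing against $f\hat X$ for $f\in C^1(M)$ yields $B^+_{i,j}((Xf)\hat X)=0$. The lower bound~\eqref{eq:lower_bounds_erg} is inherited from~\eqref{eq:lower_bounds}: along the sequence $\mathcal L_n=\mathcal L^{(i,j)}_n(x)$ given by Theorem~\ref{thm:Deviations}, the flow time $T_n=\mathcal L_n B^+_1(\hat X)(1+o(1))$ is nondegenerately comparable to $\mathcal L_n$, so a uniform positive lower bound on $|c_{i,j}(x,\mathcal L_n)|$ transfers to $|c^X_{i,j}(x,T_n)|$. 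I expect the main obstacle to be the bookkeeping of the reparametrization $\mathcal L\leftrightarrow T$: one must verify that the perturbations introduced at each exponent $\log|\mu_i|/h_{top}(A)$ do not corrupt the coefficients of strictly smaller exponents, which requires descending inductively through the Lyapunov hierarchy and relying on the uniform boundedness of the coefficients $c_{i,j}(x,\mathcal L)$ supplied by Theorem~\ref{thm:Deviations}.
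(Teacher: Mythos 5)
Your proposal starts from the same place as the paper --- rewrite $\int_0^T f\circ h^X_t(x)\,dt = \int_{h^X_{[0,T]}(x)} f\hat X$, apply formula~\eqref{eq:deviations} with $\eta = f\hat X$ and then with $\eta = \hat X$, and use $\mu_X(f) = B_1^+(f\hat X)/B_1^+(\hat X)$ --- and your treatment of the $X$-invariance via Cartan's formula and of the lower bound~\eqref{eq:lower_bounds_erg} matches the intended argument. Where you diverge is in the handling of the reparametrization $\mathcal L\leftrightarrow T$. You propose to \emph{invert} the relation $T = \mathcal L B_1^+(\hat X) + (\text{deviation terms}) + O((\log\mathcal L)^{\cdots})$ and substitute the resulting expansion of $\mathcal L$ in $T$ back into~\eqref{eq:deviations}, descending through the hierarchy of exponents. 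The paper avoids this. It first reduces to functions $f$ of zero $\mu_X$-average, so that $D_1^X(f)=B_1^+(f\hat X)=0$ and the term $\mathcal L_T(x) D_1^X(f)$ vanishes identically, and no inversion is ever performed. For the remaining deviation terms one only needs that $\mathcal L_T(x)/T$ is uniformly bounded above and below (read off from~\eqref{eq:deviations_2} with $f=1$); then $(\log\mathcal L_T(x))^{j-1}\mathcal L_T(x)^{\log|\mu_i|/h_{top}(A)}$ is converted to $(\log T)^{j-1}T^{\log|\mu_i|/h_{top}(A)}$ and the bounded \emph{multiplicative} correction factors are absorbed into $c^X_{i,j}(x,T)$. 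The general $C^1$ case follows by projecting onto the zero-average subspace.

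This difference is not cosmetic, and it is where your argument has a gap. If you substitute an inverted $\mathcal L=\mathcal L(T)$ into $\mathcal L\,B_1^+(f\hat X)$, the subleading inversion errors, multiplied by $B_1^+(f\hat X)=B_1^+(\hat X)\mu_X(f)$, contribute \emph{additive} corrections at the exponents $\log|\mu_i|/h_{top}(A)$ that are proportional to $\mu_X(f)$. But the coefficients $c^X_{i,j}(x,T)$ in the statement must be independent of $f$ (they are quantified before $f$) and the distributions $D^X_{i,j}$ are fixed; without an argument that $D^X_{i,j}(1)=B^+_{i,j}(\hat X)=0$, those $f$-dependent corrections cannot be folded into $c^X_{i,j}(x,T)D^X_{i,j}(f)$ uniformly in $f$. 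Moreover, additive corrections also threaten the transfer of the lower bounds~\eqref{eq:lower_bounds} to~\eqref{eq:lower_bounds_erg}: the paper's new coefficients differ from the old ones by a bounded multiplicative factor bounded away from zero for large $T$, which makes the transfer immediate, whereas additive perturbations from your inversion could cancel $c_{i,j}(x,\mathcal L_n)$ along the selected subsequence. The ``absorption'' step you flag as the main obstacle is precisely where the zero-average reduction gives the shortcut; as written, your substitution scheme does not close.
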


\begin{remark}  \label{rem:Deviations} (Comparison with a result of V.~Baladi~\cite{Ba}, Corollary 2.3) 
A similar, but more refined, asymptotics of ergodic averages is proved in \cite{Ba} for the case of $C^r$ Anosov diffeomorphisms of the $2$-torus,
a case for which the spectrum of $A^\#: H^1(\T^2)\to H^1(\T^2)$ has a unique expanding eigenvalue $\mu_1=\lambda>1$ (and no neutral eigenvalues).

 In fact, she proves (see \cite{Ba}, Corollary 2.3) that there exists $r_0, r_1>1$ such that for any $r\geq \max\{r_0, r_1\}$  there exist  constants $C>0$  and $\theta_{min} <0$ such that for  all $f \in C^{r-1}(\T^2)$, and for all  $T>0$,
$$
\left \vert \int_0^T f \circ h^X_t (x) dt - T \int_{\T^2} f d\mu_X  \right\vert \leq C   ( T^{\theta_{min}} 
\vert f  \vert_{C^{r-1}}  + \sup \vert f \vert) \,.
$$
In particular, $\mu_X(f) =0$ if and only if $f$ is a continuous coboundary.

In comparison with the asymptotics of Corollary~\ref{cor:Deviations} above, Baladi's asymptotics has bounded,
not logarithmic, error terms. However, such terms cannot be neglected for general functions of class $C^1$. A refined asymptotics with
bounded error terms may hold for general functions of class $C^{1+\alpha}$, for some $\alpha>0$, and for such functions one can derive results on existence of solutions of the cohomological equation (see~\cite{F07} 	and
\cite{MY16} also in the higher genus case). Such refined results are beyond the purely cohomological approach presented here.

\end{remark} 

\subsection{Ruelle-Pollicott asymptotics}

From the above equidistribution results we derive a Ruelle-Pollicott asymptotics for pseudo-Anosov maps.
For linear pseudo-Anosov maps on higher genus surfaces, a complete Ruelle-Pollicott asymptotics has been
obtained by F.~Faure, S.~Gou\"ezel and E.~Lanneau~\cite{FGL}. For toral Anosov diffeomorphisms the Ruelle-Pollicott asympotics follows from the work of P.~Giulietti and C.~Liverani \cite{GL}. V.~Baladi~\cite{Ba} has given an independent proof that in the toral case there are no ``deviation resonances''  in the Giulietti-Liverani 
asymptotics (see Remark \ref{rem:RAsymptotics} below).

\begin{corollary} 
\label{cor:RAsymptotics} Let us assume that the diffeomorphism $A$ is of class  $C^r$ with $r>2$.  Then  $A$ has a Ruelle-Pollicott asymptotics  in the sense that, for any $f, g\in C^1(M)$, the correlations $\langle f \circ A^n , g \rangle$ with respect to the Margulis measure $\mathcal M$ have an expansion
 \begin{equation}
\label{eq:RuelleA}
\begin{aligned}
\langle f \circ A^n , g \rangle &= (\int_M f d\mathcal M) (\int_M g d\mathcal M)  \\ &+ \sum_{i=2}^k \sum_{j=1}^{J_i} \mathcal C_{i,j}(f,g,n) n^{j-1} \left(\frac{\mu_i}{e^{h_{top}(A)}}\right)^n + \mathcal R(f,g,n) \frac{n^{\max(J^0_A,1)+1}}{e^{n h_{top}(A)}} \,,
\end{aligned}
\end{equation}
with $\mathcal C_{i, j} (f,g,n)$ and $\mathcal R(f,g,n)$ uniformly bounded: there exists a constant $C>0$ such that,
for all $n\in \N$ we have 
$$
\sum_{i=2}^k \sum_{j=1}^{J_i} \vert \mathcal C_{i, j} (f,g,n) \vert + \vert \mathcal R(f,g,n) \vert \leq C \vert f \vert_{C^1(M)} \vert g \vert_{C^1(M)}\,.
$$
\end{corollary}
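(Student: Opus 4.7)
My plan is to deduce the Ruelle--Pollicott asymptotics directly from Theorem~\ref{thm:Deviations}, applied to the long unstable curves obtained by iterating $A^n$ on small unstable segments of a Markov partition, together with the local product decomposition of the Margulis measure.

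I would fix a Markov partition $\{R_\alpha\}$ of $M$ adapted to $A$, write $R_\alpha = W^u_\alpha \times W^s_\alpha$, and use the product structure $\mathcal M|_{R_\alpha} = c_\alpha (\mathcal L^+ \otimes \mathcal L^-)$. Then
\begin{equation*}
\langle f\circ A^n, g\rangle = \sum_\alpha c_\alpha \int_{W^s_\alpha}\!\int_{W^u_\alpha}\!\! f(A^n(u,s))\,g(u,s)\,d\mathcal L^+(u)\,d\mathcal L^-(s).
\end{equation*}
The change of variable $y = A^n(u,s)$, using $(A^n)_*\mathcal L^+ = e^{n h_{top}(A)}\mathcal L^+$, rewrites the inner integral as $e^{-n h_{top}(A)}\int_{\gamma^{s,\alpha}_n} f(y)\,g(A^{-n}(y))\,d\mathcal L^+(y)$, where $\gamma^{s,\alpha}_n = A^n(W^u_\alpha \times \{s\})$ is an unstable curve of Margulis length $\mathcal L_n = e^{n h_{top}(A)}\ell_\alpha$ with $\ell_\alpha = \mathcal L^+(W^u_\alpha)$ --- exactly the setting of Theorem~\ref{thm:Deviations}.

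To control the modulating factor $g\circ A^{-n}$, I would exploit that $A^{-n}$ contracts the unstable direction by $e^{-n h_{top}(A)}$, so $g\circ A^{-n}$ restricted to $\gamma^{s,\alpha}_n$ is $e^{-n h_{top}(A)}|g|_{C^1}$-Lipschitz. Replacing $g\circ A^{-n}$ along each $\gamma^{s,\alpha}_n$ by its value at a reference point introduces a per-rectangle error at most $\ell_\alpha |g|_{C^1}|f|_\infty$ in the inner integral, which after the prefactor $c_\alpha e^{-n h_{top}(A)}$ and summation contributes $O(e^{-n h_{top}(A)}|f|_\infty |g|_{C^1})$, within the target remainder order. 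I then apply Theorem~\ref{thm:Deviations} to $\int_{\gamma^{s,\alpha}_n} f\hat X$ with $\hat X(X)\equiv 1$ so that the 1-form integral coincides with $\int_{\gamma} f\,d\mathcal L^+$, using the explicit formula $B^+_1(f\hat X) = \int_M f\,d\mathcal M$ from \eqref{eq:Margulis_currents}. Substituting $\mathcal L_n = e^{n h_{top}(A)}\ell_\alpha$, the prefactor $e^{-n h_{top}(A)}$ turns the main term $\mathcal L_n B^+_1(f\hat X)$ into $\ell_\alpha \int f\,d\mathcal M$, the deviation factor $\mathcal L_n^{\log|\mu_i|/h_{top}(A)}$ into $|\mu_i|^n \ell_\alpha^{\log|\mu_i|/h_{top}(A)}$, and the logarithmic factor $(\log \mathcal L_n)^{j-1}$ into $(n h_{top}(A))^{j-1}$, matching the exponents $n^{j-1}(\mu_i/e^{h_{top}(A)})^n$ in the Corollary with the complex phase of $\mu_i$ absorbed into the uniformly bounded coefficient $\mathcal C_{i,j}(f,g,n)$. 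The theorem's remainder $(\log\mathcal L_n)^{J^0_A+1}|f|_{C^1}\sim n^{J^0_A+1}|f|_{C^1}$ multiplied by $e^{-n h_{top}(A)}$ produces the required $n^{\max(J^0_A,1)+1}/e^{n h_{top}(A)}$. Finally, integrating in $s$ and summing over $\alpha$ reconstructs the leading term $(\int f\,d\mathcal M)(\int g\,d\mathcal M)$ from the product structure of $\mathcal M$.

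The main obstacle is preventing the reference-point approximation of $g$ on each Markov rectangle from leaking into the main term at constant order: a naive execution would produce an error $O(\epsilon_0 |g|_{C^1}|f|_\infty)$ with $\epsilon_0$ the unstable diameter of the partition, which does not decay in $n$. I would handle this either by refining the Markov partition dynamically as $\mathcal P_n = \bigvee_{k=-n}^n A^{-k}\mathcal P$ so that $\epsilon_0 \sim e^{-n h_{top}(A)}$ while tracking the cancellations among the $\sim e^{2n h_{top}(A)}$ resulting pieces when Theorem~\ref{thm:Deviations} is re-applied, or, more cleanly, by invoking Addendum~\ref{addendum:basic}: since each $B^\pm_{i,j}$ is basic for the unstable foliation (closed and annihilated by contraction with $X$), it is insensitive to the $u$-variation of the modulating function at leading order, so the piecewise-constant approximation of $g$ is absorbed into the bounded coefficients $\mathcal C_{i,j}(f,g,n)$ and the main-term average $\int g\,d\mathcal M$ is reconstructed up to the desired remainder.
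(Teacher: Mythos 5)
Your overall strategy --- convert the correlation integral into an average of ergodic integrals of $f$ over long unstable curves $A^n(W^u_\alpha\times\{s\})$ and then apply Theorem~\ref{thm:Deviations} --- is the same core idea as the paper's proof. The paper does not use a Markov partition, but instead averages the correlation over the unstable flow $\phi^+$ (defined, for $r>2$ so that the foliation is $C^{1+\alpha}$, as the unit-Margulis-speed flow along unstable leaves), writes $\langle f\circ A^n,g\rangle=\frac1\sigma\int_0^\sigma\langle f\circ A^n\circ\phi^+_t,\,g\circ\phi^+_t\rangle\,dt$, and then handles the modulating factor $g$ by \emph{integration by parts in $t$}, leaving only ergodic integrals $\int_0^s f\circ A^n\circ\phi^+_\tau\,d\tau$ paired against $g\circ\phi^+_\sigma$ or $\frac{d}{ds}(g\circ\phi^+_s)$; the conjugation identity $\int_0^s f\circ A^n\circ\phi^+_\tau\,d\tau=\lambda^{-n}\int_0^{\lambda^n s}f\circ\phi^+_\tau\circ A^n\,d\tau$ then makes Theorem~\ref{thm:Deviations} applicable directly.

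The genuine gap in your proposal is precisely the one you flag yourself: the reference-point approximation of $g\circ A^{-n}$ on each rectangle produces an error of size $O(\epsilon_0\,|g|_{C^1}|f|_\infty)$, which is constant in $n$ and sits above the leading term of the expansion, not below the remainder. Neither of your proposed repairs closes this gap as written. For the dynamical refinement $\mathcal P_n=\bigvee_{k=-n}^nA^{-k}\mathcal P$, you would need to exhibit the cancellation mechanism among the $\sim e^{2nh_{top}(A)}$ pieces explicitly; this is the nontrivial part and is not supplied. The appeal to Addendum~\ref{addendum:basic} also does not do what you want: that $B^+_{i,j}$ is basic (closed and $\imath_XB^+_{i,j}=0$) tells you how these currents interact with vector fields tangent to $\mathcal F^+$, but it does \emph{not} say that $B^+_{i,j}(\phi\,\eta)$ is insensitive to the transverse variation of a modulating function $\phi$, and the offending error lives in the piece of Theorem~\ref{thm:Deviations} that is not captured by the currents $B^+_{i,j}$ at all --- it is the $O(1)$ difference between $\int_\gamma f(y)g(A^{-n}y)\,d\mathcal L^+$ and $g(u_0,s)\int_\gamma f\,d\mathcal L^+$ \emph{before} the theorem is invoked. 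The integration-by-parts step in the paper's proof is exactly the missing device: it decouples $g$ from the ergodic integral so that only $f$ enters Theorem~\ref{thm:Deviations}, while $g$ contributes through bounded $C^0$/$C^1$ factors that never need to be frozen along unstable leaves.
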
 

\begin{remark} (Comparison with a result of V.~Baladi~\cite{Ba}, Corollary 2.5)  
\label{rem:RAsymptotics} A similar, but more refined, Ruelle-Pollicott asymptotics is proved in \cite{Ba} for the case of $C^r$ Anosov diffeomorphisms of the $2$-torus,
a case for which the spectrum of $A^\#: H^1(\T^2)\to H^1(\T^2)$ has a unique expanding eigenvalues $\mu_1=\lambda>1$ (and no neutral eigenvalues).

 In fact, she proves (see \cite{Ba}, Corollary 2.5) that for any $r>1$  there exist  constants $C>0$  and $\rho \in(0,1)$ such that for  all $f ,g \in C^{r-1}(\T^2)$, and for all  $n\in \N$,
$$
\left \vert \langle f \circ A^n , g \rangle - (\int_{\T^2} f d\mathcal M) (\int_{\T^2} g d\mathcal M) \right\vert \leq C \rho^n \vert f  \vert_{C^{r-1}} \vert g  \vert_{C^{r-1}} \,.
$$
The error term in the above estimate is refined as follows. Let $\lambda^+ > 1 > \lambda^-$ denote the Lyapunov
exponents. There exists $\tilde \rho_A < e^{h_{top}(A)} \min (\lambda^+, (\lambda^-)^{-1})^{-\frac{r-1}{2}}$  such that for $\tilde \rho_A<1 $ the estimate holds for some $\rho< e^{-h_{top}(A)}$. In particular, there exists $r_1>1$ such that whenever $A$ is of class $C^r$ for $r>r_1$ then $\tilde \rho_A<1$ and the estimates hold for some $\rho < e^{-h_{top}(A)}$. For $\tilde \rho_A \geq 1$ the estimate holds for any $\rho > e^{-h_{top}(A)} \tilde \rho_A$.

\end{remark}
The paper is organized as follows. In Section~\ref{sec:closed_curr} we prove a representation lemma for stable and unstable cohomology classes in terms, respectively, of stable and unstable closed (basic) currents in the dual space of $1$-forms of class $C^1$ (Lemma \ref{lemma:rep}).
From the representation lemma and de Rham theorem we derive a result on the asymptotics of the action of
the pseudo-Anosov diffeomorphism on the space of closed currents dual to the space of $1$-forms of class $C^1$. In Section~\ref{sec:return_leaves} we apply the results of Section~\ref{sec:closed_curr} first to a special sequence of return leaves of the unstable foliation, then after proving a standard decomposition lemma (see Lemma~\ref{lemma:decomposition}), to all leaves. We then complete the proof of the main result, Theorem~\ref{thm:Deviations}. Finally, in Section \ref{sec:RAsymptotics} we derive the proof of Corollary~\ref{cor:Deviations}  on the deviation of ergodic averages and Corollary~\ref{cor:RAsymptotics} on the Ruelle-Pollicott asymptotics. 

 \section*{Acknowledgements} The author wishes to thank  V.~Baladi, P.~Giulietti, S.~Gou\"ezel and C.~Liverani for their interest in the cohomological perspective presented here and their encouragement to write up this note.
 He is also grateful to the participants of the Oberwolfach Seminar on {\it Anisotropic Spaces and their Applications to Hyperbolic and Parabolic Systems} (June 9-15, 2019)  for the stimulating environment and to MFO for making  the Seminar possible. Part of the paper took shape while the author was preparing his lectures at the Seminar.  This research was supported by the NSF grant DMS 1600687 and by a Research Chair of  the Fondation Sciences Math\'ematiques de Paris (FSMP). The author is grateful to the Institut de 
 Math\'ematiques de Jussieu (IMJ) for its hospitality during the preparation of the first draft of the paper.

\section{Growth of closed currents}
\label{sec:closed_curr}
\begin{lemma}
\label{lemma:rep}
There exist injective maps ${\mathcal B}^\pm : E^\pm \to {\mathcal Z}^{-1}(M) \subset \Omega^1(M)^\ast$ into the subspace  ${\mathcal Z}^{-1}(M)$ of closed currents (of degree and dimension~$1$)  dual to the space $\Omega^1(M)$ of  $1$-forms of class $C^1$ on $M$ such that
$$
A_* \circ {\mathcal B}^\pm =  {\mathcal B}^\pm  \circ  A^\#    \quad \text{ on}\quad  E^\pm \subset H^1(M, \C)\,.
$$
\end{lemma}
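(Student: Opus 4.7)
The plan is to construct $\mathcal{B}^\pm$ by a renormalization procedure applied to smooth closed representatives of cohomology classes: iterate under $A$, divide by the appropriate eigenvalue (with polynomial-in-$n$ corrections for Jordan blocks), and extract a weak-$*$ limit in the dual of $C^1(\Omega^1(M))$. I would handle $\mathcal{B}^+$ first; the construction of $\mathcal{B}^-$ is formally identical with $A$ replaced by $A^{-1}$.

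For the leading expanding class $\mathcal{C}^+_{1,1}$ with eigenvalue $\lambda = e^{h_{top}(A)}$, I would take the Margulis current $B_1^+(\alpha) := \int_M \mathcal{M}^- \otimes \alpha$ from \eqref{eq:Margulis_currents} as an explicit model: it is manifestly closed (product of an invariant transverse measure and a smooth $1$-form integrated along unstable leaves), and $A_*$-equivariance follows from the scaling $A_* \mathcal{M}^- = \lambda^{-1} \mathcal{M}^-$ of the stable Margulis measure. For a general expanding eigenclass $c \in E^+$ with $A^\# c = \mu c$, pick a smooth closed representative $\alpha \in \Omega^1(M)$ and consider the sequence of currents
\begin{equation*}
T_n := \mu^{-n} (A^n)_* [\alpha] \in \Omega^1(M)^{\ast},
\end{equation*}
where $[\alpha]$ is the current $\omega \mapsto \int_M \alpha \wedge \omega$. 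Since $A^{\ast}\alpha - \mu\alpha$ is exact, a telescoping identity of the form $T_{n+1} - T_n = \mu^{-(n+1)} (A^{n+1})_{\ast} d\phi_0$ bounds differences in terms of $|\mu|^{-n}$ times norms of a primitive $\phi_0$, and convergence of $\{T_n\}$ in the weak-$*$ topology on $\Omega^1(M)^{\ast}$ is the thing to prove.

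The crux is a hyperbolic estimate: pairing $T_n$ against $\omega \in C^1(\Omega^1(M))$ reduces to controlling $\int_M \alpha \wedge (A^n)^{\ast}\omega$; decomposing $\omega$ along the stable/unstable splitting shows that the stable component of $(A^n)^{\ast}\omega$ contracts by a factor $\lambda^{-n}$ while its unstable component is expanded by $\lambda^n$, the latter being killed by the normalization $\mu^{-n}$ when $|\mu| < \lambda$ and by $\mathcal{M}^-$-transverse averaging in the top case $|\mu|=\lambda$. For a Jordan block of size $J_i$ at eigenvalue $\mu_i$, I would build the basis currents $B^+_{i,j}$ by running the same limit with logarithmic/polynomial corrections ($n^{j-1}$) built into the normalization so that $A_*$ acts on the $(i, j)$-basis as the corresponding Jordan companion.

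Closedness and equivariance then come almost for free: each $T_n$ is closed because pushforward commutes with $d$, and closedness is preserved by weak-$*$ limits dual to $C^1$ $1$-forms; the equivariance $A_* \circ \mathcal{B}^+ = \mathcal{B}^+ \circ A^\#$ follows from the identity $A_* T_n = \mu T_{n-1}$ on eigenclasses, with the analogous off-diagonal identities on Jordan chains. For injectivity I would compute the de Rham cohomology class of $\mathcal{B}^+(c)$ by pairing against smooth closed representatives of a basis of $H^1(M,\C)$ and verify, via the intersection form (and the identification of $\mathcal{B}^+(c)$ with a nonzero multiple of $c$ in $H^1$), that $\mathcal{B}^+$ kills no nonzero class; since $E^+ \cap E^- = \{0\}$ and $\mathcal{B}^\pm$ are constructed only on their own eigenspaces, this suffices.

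The main obstacle I expect is the convergence estimate near the singular set $\Sigma$, where the pseudo-Anosov splitting degenerates and the vector fields tangent to $\mathcal{F}^\pm$ fail to be globally smooth; controlling $\int_M \alpha \wedge (A^n)^{\ast}\omega$ uniformly requires careful bookkeeping of the transverse derivatives of $\omega$ in a neighborhood of $\Sigma$, and this is precisely where the H\"older / $C^1$ framework (rather than the $L^2$ Sobolev framework of \cite{F02}) is needed, since we have no smooth invariant volume to integrate against.
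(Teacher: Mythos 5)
Your starting point --- renormalize iterates of a fixed closed $C^1$ representative of a cohomology class --- is exactly the paper's, but the convergence argument you propose is both unnecessary and, as stated, incorrect, while the step you abandon (the telescoping identity) is precisely the one that closes the proof.

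The paper writes $A^\ast\circ\alpha = \alpha\circ A^\# + du$ for a linear choice of closed $C^1$ representatives $\alpha$ and a linear map $u:H^1(M,\C)\to C^0(M)$, iterates, and observes that for $C\in E^+$
\begin{equation*}
(A^\ast)^n\alpha\bigl((A^\#)^{-n}C\bigr) \;=\; \alpha(C) + d\Bigl(\sum_{k=1}^n (A^\ast)^k u\bigl((A^\#)^{-k}C\bigr)\Bigr)\,.
\end{equation*}
The series inside $d(\cdot)$ converges \emph{absolutely in $C^0(M)$}, because $(A^\ast)^k$ is an isometry of $C^0$ (pure precomposition, independent of any hyperbolicity or of what happens near $\Sigma$) and $(A^\#)^{-1}\vert E^+$ is a linear contraction. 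One then sets $\mathcal B^+(C):=\alpha(C)+dU(C)$ with $U(C)\in C^0$; this is a current on $C^1$ one-forms after one integration by parts. Equivariance is the trivial manipulation of the limit, and injectivity is exactly your observation that $[\mathcal B^+(C)]=[C]$. No stable/unstable decomposition of test forms, no Lyapunov rates, no transverse averaging.

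Two concrete problems with what you propose instead. First, the ``hyperbolic estimate'' has the divergence on the wrong side: the term $\alpha^s\wedge\bigl((A^n)^\ast\omega\bigr)^u$ in the wedge product scales like $\lambda^n$, so after the normalization $\mu^{-n}$ it behaves like $(\lambda/\vert\mu\vert)^n$, which \emph{blows up} for $\vert\mu\vert<\lambda$; it is not killed. The cancellation that actually saves the estimate is cohomological, not pointwise, and the stable/unstable decomposition of a $1$-form is only H\"older, so the decomposition itself is not a legitimate operation in $C^1$. Second, your telescoping observation already contains the proof: once you write $T_{n+1}-T_n = \mu^{-(n+1)}(A^n)_\ast d\phi_0$, pairing against $\omega\in C^1$ and applying Stokes moves the $d$ onto $\omega$, and the resulting quantity is $\mu^{-(n+1)}\int_M(\phi_0\circ A^{-n})\,d\omega$, bounded by $\vert\mu\vert^{-(n+1)}\Vert\phi_0\Vert_{C^0}\Vert\omega\Vert_{C^1}\mathrm{vol}(M)$ --- summable, no further input needed. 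You should have stopped there. Relatedly, your final paragraph worries about the degeneration of the splitting near $\Sigma$; this concern arises only from the hyperbolic-estimate route and simply does not appear in the cohomological proof, where the only dynamical input is the $C^0$-isometry of $A^\ast$ on functions, which holds globally and trivially.
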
  
\begin{proof}
 There exists a linear map $\alpha: H^1(M, \C) \to Z^1(M) \subset \Omega^1(M)$ of closed form of class
$C^1$ on $M$ such that, for any $C\in H^1(M, \C)$ we have that $[\alpha(C)] = C \in  H^1(M,\C)$, hence
there exists a (bounded) linear map $u: H^1(M, \C) \to C^{r+1}(M)$ such that 
$$
A^\ast \circ \alpha  = \alpha \circ A^\# +  d u   \quad \text{ on} \quad   H^1(M, \C) \,.
$$
We note that since the exterior derivative is elliptic, for every $C\in H^1(M, \C)$, the function $u(C) \in C^{r+1}$ under the hypothesis  that $A$ is a $C^r$ diffeomorphism.

By iterating the above identity, for any $C\in H^1(M, \C)$, we have
$$
(A^\ast)^n (\alpha(C)) = \alpha [{(A^\#)^n(C)}] +  d  [A^\ast (u ((A^\#)^{n-1}(C))]     + \dots   +  d [(A^\#)^n \circ u(C)]\,.
$$
It follows that 
$$
(A^\ast)^n \circ  \alpha \circ (A^\#)^{-n}   =  \alpha +   d  \circ A^\ast \circ u \circ ( A^\#)^{-1}      + \dots   +  
d  \circ (A^\ast)^n  \circ u  \circ (A^\#)^{-n}  \,.
$$
Let us give the argument for the unstable space $E^+ \subset H^1(M,\C)$, otherwise we replace $A$ with it inverse $A^{-1}$. 
We claim that, since the restriction $A^\#\vert E^+$ is (strictly) expanding, it follows  by completeness that
for every $C\in E^+ \subset H^1(M, \C)$, the following limit exists in $C^0(M)$ (and in $L^2(M)$ 
in the volume preserving case):
$$
\begin{aligned}
U (C) &:=   (A^\ast \circ u) (( A^\#)^{-1} C)     + \dots   +  
 ((A^\ast)^n  \circ u)  ((A^\#)^{-n}C)  \\ &= \sum_{k=1}^\infty   ((A^\ast)^k \circ u) ( (A^\#)^{-k}C )  \in C^0(M)\,.
\end{aligned}
$$
In fact, for every function $u \in C^0(M)$ we have that 
$$
\Vert  (A^\ast)^n (u)    \Vert_{C^{0}(M)} =  \Vert  u   \Vert_{C^{0}(M)}   
$$
and, in the volume preserving case, also
$$
\Vert  (A^\ast)^n (u)    \Vert_{L^{2}(M, \text{vol} )} =  \Vert  u   \Vert_{L^{2}(M, \text{vol} )}   \,.
$$
Thus, since $A^\#\vert E^+$ is (strictly) expanding,  there exists $\rho >1$ such that, for all $k\in \N$, 
$$
\Vert  ((A^\ast)^k \circ u) ( (A^\#)^{-k}C )  \Vert_{C^0(M)} \leq  \left( \max_{\vert C\vert=1}
 \Vert u(C) \Vert_{C^0(M)} \right)  \rho^{-k} \,,
$$
which implies the absolute convergence of the series, hence the claim.

It follows that the following limit exists in $(C^1(M))^\ast$:
$$
\mathcal B^+(C):= \lim_{n\to + \infty}  (A^\ast)^n \circ \alpha \circ (A^\#)^{-n} (C) =  \alpha (C) + d U(C) \,.
$$ 
By construction we clearly have $[\mathcal B^+(C)] = [\alpha (C)] = [C] \in H^1(M,\C)$ and 
$$
\begin{aligned}
A_\ast (\mathcal B^+(C) )& = \lim_{n\to + \infty}  (A^\ast)^{n+1} \left(\alpha ( (A^\#)^{-n}  (C)    \right) \\ & =  
 \lim_{n\to + \infty}  (A^\ast)^{n+1} \left( \alpha ( (A^\#)^{-(n+1)}  ( A^\# C)    \right) = 
 \mathcal B^+ ( A^\# C)\,.
\end{aligned}
$$
The map $\mathcal B^+: E^+ \to \mathcal Z^{-1}(M)$ is therefore defined, it is linear by its definition and it is injective since $[\mathcal B^+(C)] =[C] \in H^1(M, \C)$.
\end{proof}

\begin{remark} By definition of the Margulis measure, the currents in formula   are distributional eigenvectors for the diffeomorphism $A$ for the eigenvalues $\lambda^{\pm1}$ since
$$
\begin{aligned}
A_\ast B^+_1 (\alpha)&= B^+_1 ( A^\ast \alpha ) = \int_M \mathcal M^- \otimes A^\ast \alpha  = \int_M  A^{-1}_\ast \mathcal M^- \otimes  \alpha = \lambda B^+_1 (\alpha ) \,, \\
A_\ast B^-_1(\alpha)&=  B^-_1( A^\ast \alpha ) = \int_M A^\ast \alpha \otimes \mathcal M^+  = \int_M   \alpha \otimes  A^{-1}_\ast \mathcal M^+
= \lambda^{-1} B^-_1 (\alpha )  \,.
\end{aligned}
$$
Since the eigenvalues $\lambda^{\pm 1}$ for action $A^*$ of the diffeomorphism $A$ on $H^1(M, \C)$ are simple, it follows that  the Margulis currents $B^\pm_1$  in formula~\eqref{eq:Margulis_currents} are (up to multiplicative constants) the unique distributional eigenvectors of eigenvalues $\lambda^{\pm 1}$  for the linear map $A_\ast$ on the space of closed
$1$-currents. 
\end{remark} 

Let us recall that $\mathcal Z^{-1}(M)$ denotes the space of closed currents of dimension and degree $1$ on $M$ dual to the space of $1$-forms of class $C^1$.  Let $E^+$, $E^-$ and $E^0$ denote, respectively, the unstable, the stable and the central stable space of the linear map $A^\#: H^1(M, \C) \to H^1(M, \C)$ induced by $A:M\to M$ on the first cohomology of $M$.  There is a direct decomposition
\begin{equation}
\label{eq:cohom_split}
H^1(M, \C)= E^+ \oplus E^- \oplus E^0\,.
\end{equation}
By Lemma \ref{lemma:rep} there exist  maps $\mathcal B^\pm: E^\pm \to \mathcal Z^{-1}(M)$ such that 
$$
A^* \circ \mathcal B^\pm =  \mathcal B^\pm \circ A^\#    \quad \text{ on} \quad E^\pm\,.
$$
There is also a linear map $\mathcal B^0: E^0\to  {\mathcal Z}^1 (M)$,  with values in the space ${\mathcal Z}^1 (M)$ of closed smooth $1$-forms, and  a linear map $F: E^0 \to C^\infty(M)$ such that 
$$
A^* \circ \mathcal B^0 =    \mathcal B^0 \circ A^\#   +   d F  \quad \text{ on} \quad E^0\,.
$$
The restriction $A^\# \vert E^0$ is by definition a unipotent linear operator. Let $J^0_A$ the dimension of the
largest Jordan block of $A^\# \vert E^0$.

For every closed current $\gamma \in \mathcal Z^{-1}(M)$ let $[\gamma] \in H^1(M, \R)$ denote its cohomology
class and let $[\gamma]^+$, $[\gamma]^-$ and $[\gamma]^0$ denote, respectively, the projections of the cohomology class $[\gamma]$ on the subspaces $E^+$, $E^-$ and $E^0$ according to the decomposition 
in formula~\eqref{eq:cohom_split}, that is, for all $\gamma \in \mathcal Z^{-1}(M)$ we have 
$$
[\gamma] = [\gamma] ^+  + [\gamma]^- + [\gamma] ^0 \,, \quad \text{ with } [\gamma]^\pm \in E^\pm \text{ and }
[\gamma]^0 \in E^0\,.
$$

\begin{lemma}
\label{lemma:closed_currents}
There exists $C>0$ such that, for any closed current $\gamma \in \mathcal Z^{-1}(M)$ of dimension $1$ (and degree $1$)  and for any $n\in \N$, we have
$$
\Vert  A_\ast^n(\gamma) -  \mathcal B^+\left( (A^\#)^n [\gamma]^+ \right)   \Vert_{-1} \leq C \Vert [\gamma] \Vert\, n^{\max(J^0_A,1)}\,.
$$
\end{lemma}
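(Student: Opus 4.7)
The plan is to use the $A^\#$-invariant decomposition $H^1(M,\C)=E^+\oplus E^-\oplus E^0$ combined with de Rham's theorem for currents to write $\gamma$ as a sum of canonical cohomological pieces plus an exact remainder, then apply $A_\ast^n$ to each piece using the intertwining relations already established. Specifically, using the maps $\mathcal B^\pm$ from Lemma~\ref{lemma:rep} and the map $\mathcal B^0:E^0\to\mathcal Z^1(M)$ into closed smooth $1$-forms, I would first write
$$
\gamma\;=\;\mathcal B^+([\gamma]^+)\;+\;\mathcal B^-([\gamma]^-)\;+\;\mathcal B^0([\gamma]^0)\;+\;dU_\gamma,
$$
where $U_\gamma$ is a $0$-current furnished by de Rham's theorem applied to the cohomologically trivial closed current on the right-hand side. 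Because $dU_\gamma$ must lie in $\mathcal Z^{-1}(M)$ (i.e.\ be dual to $C^1$ one-forms), the primitive $U_\gamma$ can be taken to be a finite signed measure on $M$, determined up to a constant.

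Next I would apply $A_\ast^n$ and invoke the intertwining relations. On $E^\pm$, Lemma~\ref{lemma:rep} gives directly $A_\ast^n\mathcal B^\pm([\gamma]^\pm)=\mathcal B^\pm((A^\#)^n[\gamma]^\pm)$. Iterating the twisted intertwining $A^\ast\circ\mathcal B^0=\mathcal B^0\circ A^\#+dF$ on $E^0$ yields
$$
A_\ast^n\mathcal B^0([\gamma]^0)=\mathcal B^0((A^\#)^n[\gamma]^0)+d\Phi_n([\gamma]^0),\qquad\Phi_n(C):=\sum_{k=0}^{n-1}(A^\ast)^k F\bigl((A^\#)^{n-1-k}C\bigr).
$$
Finally, from $A_\ast\circ d=d\circ A_\ast$ on $0$-currents one has $A_\ast^n(dU_\gamma)=d(A_\ast^n U_\gamma)$.

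The three error contributions are then estimated separately. The stable term $\mathcal B^-((A^\#)^n[\gamma]^-)$ is uniformly bounded by $C\Vert[\gamma]\Vert$ since $A^\#\vert E^-$ is a strict contraction and $\mathcal B^-:E^-\to\mathcal Z^{-1}(M)$ is continuous. For the central term, the largest Jordan block of the unipotent $A^\#\vert E^0$ has size $J^0_A$, giving $\Vert(A^\#)^n[\gamma]^0\Vert\leq Cn^{J^0_A-1}\Vert[\gamma]\Vert$; combined with the $C^0$ isometry $\Vert A^\ast\psi\Vert_{C^0}=\Vert\psi\Vert_{C^0}$ this yields $\Vert\Phi_n([\gamma]^0)\Vert_{C^0}=O(n^{J^0_A}\Vert[\gamma]\Vert)$, and the elementary duality $\Vert d\psi\Vert_{-1}\leq\Vert\psi\Vert_{C^0}$, obtained by integration by parts on the closed surface, then gives $\Vert d\Phi_n([\gamma]^0)\Vert_{-1}=O(n^{J^0_A}\Vert[\gamma]\Vert)$. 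For the exact remainder, the identity $d(A_\ast^n U_\gamma)(\eta)=U_\gamma((A^\ast)^n d\eta)$ together with $\Vert(A^\ast)^n d\eta\Vert_{C^0}=\Vert d\eta\Vert_{C^0}\leq\Vert\eta\Vert_{C^1}$ yields $\Vert d(A_\ast^n U_\gamma)\Vert_{-1}\leq\Vert U_\gamma\Vert_{\mathrm{TV}}$, independently of $n$.

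The main obstacle I foresee lies in this last step: although the bound $\Vert d(A_\ast^n U_\gamma)\Vert_{-1}\leq\Vert U_\gamma\Vert_{\mathrm{TV}}$ is uniform in $n$, one must verify that the measure norm of the de Rham primitive can be controlled in terms of $\Vert[\gamma]\Vert$, either via a judicious canonical choice of $U_\gamma$ (using Hodge theory or the structure of the currents $\mathcal B^\pm$), or by interpreting the right-hand side of the inequality as also absorbing a fixed-in-$n$ norm of $\gamma$ itself. Once this point is settled, the three bounds combine to give the announced polynomial growth $n^{\max(J^0_A,1)}$, the maximum with $1$ accommodating the cases where $E^0$ reduces to a single non-Jordan eigenvalue or is trivial.
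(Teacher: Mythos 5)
Your proposal follows essentially the same strategy as the paper's proof: decompose the closed current via de Rham's theorem into the canonical pieces $\mathcal B^+([\gamma]^+)+\mathcal B^-([\gamma]^-)+\mathcal B^0([\gamma]^0)+dU_\gamma$, push forward by $A_\ast^n$ using the intertwining relations, and estimate each piece. The only cosmetic difference is that the paper rewrites $A_\ast^n(\gamma)$ for each $n$ and extracts a one-step recursion for $(X^\pm(n),X^0(n),U_n)$, whereas you apply $A_\ast^n$ once to the $n=0$ decomposition and unwind the result in closed form; your $\Phi_n$ is exactly the paper's $U_n$ minus the initial contribution, and the two formulations are equivalent.

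The ``main obstacle'' you identify at the end is a genuine imprecision, and it is present in the paper's own proof as well. Unwinding the paper's recursion $U_{n+1}=F(X^0(n))+A^\ast(U_n)$ gives $U_n=\sum_{k=1}^{n}(A^\ast)^{k-1}F(X^0(n-k))+(A^\ast)^n U_0$, but the estimate the paper records, $\Vert U_n\Vert_{C^0(M)^\ast}\leq C'_F\Vert[\gamma]^0\Vert\sum_{m=0}^{n}(m+1)^{\max(J^0_A,1)-1}$, silently drops the term $(A^\ast)^n U_0$. That term has $C^0(M)^\ast$-norm equal to $\Vert U_0\Vert_{C^0(M)^\ast}$, uniformly in $n$, but it is not controlled by $\Vert[\gamma]\Vert$: for an exact current $\gamma=du$ with $[\gamma]=0$ the right-hand side of the lemma vanishes while the left-hand side does not. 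The bound should really read $\leq C\bigl(\Vert[\gamma]\Vert+\Vert U_\gamma\Vert_{C^0(M)^\ast}\bigr)\,n^{\max(J^0_A,1)}$ (or be restricted to $n\geq 1$ with an absorbed constant depending on $\gamma$ through its de Rham primitive). In the paper's application (Theorem~\ref{thm:asymptotic}) this is harmless, since the lemma is invoked only for the closed loops $\gamma(A^{-\ell}(x_{\ell,m}))$, which come from a compact family of bounded-length curves, so $\Vert U_\gamma\Vert_{C^0(M)^\ast}$ is uniformly bounded alongside $\Vert[\gamma]\Vert$. You should state this explicitly rather than leave it as an open concern, but your instinct to flag it was correct.
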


\begin{proof}
For every $n\in \N$, there exist  $X^\pm(n) \in E^\pm$, $X^0(n) \in E^0$ and a current  $U_n$ of dimension
$2$ (and degree $0$) in the dual space of the space of  $2$-forms with $C^0$ coefficients,  such that we can write
$$
A^n_\ast  (\gamma)=\mathcal B^+ (X^+(n))   + \mathcal B^- (X^-(n))   + \mathcal B^0 (X^0(n) ) + dU_n\,.
$$

We therefore have the identities
$$
\begin{aligned}
A^{n+1}_\ast  (\gamma)&=\mathcal B^+ (X^+(n+1))   + \mathcal B^- (X^-(n+1))   + \mathcal B^0 (X^0(n+1) ) + dU_{n+1}  \\ &=   A_\ast \left( \mathcal B^+ (X^+(n))   + \mathcal B^- (X^-(n))   + \mathcal B^0 (X^0(n) ) + dU_{n}   \right)  \\ &=  \mathcal B^+ (A^\# X^+(n))   + \mathcal B^- (A^\# X^-(n))   + \mathcal B^0 (A^\# X^0(n) ) + 
 dF (X^0(n)) + dA^\ast U_{n}   
 \,. 
\end{aligned}
$$ 
By projecting the above identity on cohomology  we have
$$
\begin{aligned}
X^\pm (n+1)  &=  A^\# X^\pm(n),   \quad  X^0(n+1) =  A^\# X^0(n) \quad \text{ and }
\\ &U_{n+1}= F(X^0(n)) +   A^\ast(U_{n})\,,
\end{aligned}
$$
from which we derive that 
$$
\begin{aligned}
&X^\pm_j(n)  =  (A^\#)^n( x^\pm (0))= (A^\#)^n( [\gamma]^\pm)  ,  \\ & X^0 (n) =  (A^\#)^n(X^0 (0))= (A^\#)^n  
([\gamma]^0) \,,
\end{aligned}
$$
and that there exists a constant $C>0$ such that 
$$
 \Vert U_n\Vert_{C^0(M)^*}  \leq   C \Vert [\gamma]^0 \Vert\,  n^{\max(J^0_A,1)} \,.
$$
In fact
$$
\begin{aligned}
\Vert U_{n+1}\Vert_{C^0(M)^*} \leq  \Vert A^\ast(U_{n}) \Vert_{_{C^0(M)^*}} + C_F \Vert X^0 (n) \Vert  \\  \leq  \Vert U_{n} \Vert_{C^0(M)^*} + C'_F  (n+1)^{\max(J^0_A,1)-1} \Vert X^0(0) \Vert \,,
\end{aligned}
$$
hence
$$
\Vert U_{n}\Vert_{_{C^0(M)^*}} \leq C'_F \Vert [\gamma]^0\Vert   \sum_{m=0}^n   (m+1)^{\max(J^0_A,1)-1} \,,
$$
thus the argument is concluded.
\end{proof}

\section{Closest return leaves}
\label{sec:return_leaves}
In this section we derive the asymptotics for closed currents given by closest return leaves of  the unstable
foliation. We measure distances on $M$ along the  unstable and stable foliations by the conditional measures
$\mathcal L^\pm$  of the Margulis measure. 

For all $x\in M$ let $I^s(x)$ denote a stable curve centered at $x\in M$ of unit length and let $\bar \gamma(x)$ 
be an unstable return curve with endpoints $x, y \in I^s(x)$ with no other intersections with $I^s(x)$. 
Let $\gamma(x)= \bar \gamma(x) \cup J^s(x,y)$ denote the closed arc which is union of the unstable curve
$\bar \gamma(x)$ with the stable curve $J^s(x,y)\subset I^s(x)$ with endpoints $x,y \in I^s(x)$. 
By compactness there exist constants $C >c >0$ such that 
$$
c  \leq \mathcal L ^+(\bar \gamma(x)) \leq  C \,.
$$
For every $n\in \N$, let then $\bar \gamma_n (x)$ and $\gamma_n (x)$ be the curves
$$
\bar \gamma_n(x) = A^n \left( \bar \gamma( A^{-n}(x)) \right) \quad \text{ and } \quad 
\gamma_n(x) = A^n \left( \gamma( A^{-n}(x) ) \right) \,.
$$
Let $\lambda= e^{h_{top}(A)}$ denote the expansion rates of the diffeomorphism $A$ on $M$ with respect to the unstable Margulis measure. We have 
$$
c\lambda^n    \leq \mathcal L^ + (\bar \gamma_n(x)) \leq  C  \lambda^n \,.
$$
We have the following decomposition result:
\begin{lemma}
\label{lemma:decomposition}
Any unstable curve $\bar \gamma$ has a decomposition into consecutive closest returns 
closed curves,
\begin{equation}
\label{eq:chop} 
\bar \gamma = \sum_{\ell=1}^n \sum_{m=1}^{m_\ell} \bar \gamma_\ell( x_{\ell,m}) \,\, + \,\, 
\gamma_0 \,, 
\end{equation}
such that $c\lambda^n    \leq \mathcal L^+ (\bar \gamma) \leq C \lambda^{n+1}    $ and,  for all $1\leq \ell \leq n$, 
\begin{equation}
\label{eq:chopest}
m_\ell \leq C \lambda/c \,,  \quad  \mathcal L ^+ ( \bar \gamma_\ell( x_{\ell,m})) \in [c\lambda^\ell, C\lambda^\ell]  \quad \text{\rm and} \quad
 \mathcal L ^+ (\gamma_0) ) \leq  C\lambda/c\,.
\end{equation}
\end{lemma}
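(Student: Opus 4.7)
The plan is to prove Lemma~\ref{lemma:decomposition} by a greedy base-$\lambda$ decomposition of $\bar\gamma$ into closest-return curves, starting from the coarsest admissible scale $\ell=n$ and descending to $\ell=1$. First I would set $L:=\mathcal L^+(\bar\gamma)$, choose the integer $n$ determined by $c\lambda^n\leq L\leq C\lambda^{n+1}$, parametrize $\bar\gamma$ by unstable Margulis length from one endpoint as $t\mapsto x(t)$, $t\in[0,L]$, and initialize $t_0:=0$, $\ell:=n$.

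The greedy step is as follows: at the current base point $x(t_0)$ and current level $\ell$, consider the candidate piece $\bar\gamma_\ell(x(t_0))$, which by construction is an unstable subarc emanating from $x(t_0)$ of length $L_\ell(t_0)\in[c\lambda^\ell,C\lambda^\ell]$; since both this piece and the remaining portion of $\bar\gamma$ from $x(t_0)$ lie on the same unstable leaf, the candidate piece is an actual subarc of $\bar\gamma$ whenever it fits. If $t_0+L_\ell(t_0)\leq L$, append this piece as the next $\bar\gamma_\ell(x_{\ell,m})$, advance $t_0\mapsto t_0+L_\ell(t_0)$, and repeat at the same level; otherwise, drop to level $\ell-1$. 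Iterating until $\ell$ reaches $0$ produces the sum in~\eqref{eq:chop}, with $\gamma_0$ the residual arc. The only technicality is that $\bar\gamma_\ell(x(t_0))$ must be defined at each successive base point; the topological-conjugacy hypothesis on $A$ guarantees a full-measure set of admissible points with respect to the Margulis measure, so the admissible set of $t\in[0,L]$ has full Lebesgue measure, and the finitely many base points can be adjusted by arbitrarily small shifts (absorbed into $\gamma_0$) to make every $\bar\gamma_\ell(x_{\ell,m})$ well defined.

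The size bounds in~\eqref{eq:chopest} follow from straightforward accounting. Let $r_\ell$ denote the remaining length when level $\ell$ is first entered. Then $r_n=L\leq C\lambda^{n+1}$ by the choice of $n$, and for $\ell<n$ the transition out of level $\ell+1$ happens precisely because the next would-be level-$(\ell+1)$ piece, of length at most $C\lambda^{\ell+1}$, did not fit; hence $r_\ell<C\lambda^{\ell+1}$. Since each appended level-$\ell$ piece consumes at least $c\lambda^\ell$ of the remaining length, one obtains $m_\ell\leq r_\ell/(c\lambda^\ell)\leq C\lambda/c$, which is the first two bounds in~\eqref{eq:chopest}. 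When the level $\ell=1$ phase terminates, the leftover satisfies $\mathcal L^+(\gamma_0)<C\lambda\leq C\lambda/c$, after possibly shrinking $c$ to ensure $c\leq 1$, giving the final bound. The one place where care is actually required is the existence of closest-return curves at each generated base point; this is the main (mild) obstacle, handled as indicated above by the full-measure admissibility together with a negligible reparametrization absorbed into $\gamma_0$.
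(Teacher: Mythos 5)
Your greedy level-by-level decomposition is correct and is essentially the paper's construction; the bound on $m_\ell$ is obtained by the same counting, just phrased via remaining-length accounting rather than the paper's covering argument (too many consecutive level-$\ell$ pieces would fit inside a single level-$(\ell+1)$ piece, contradicting greediness). You also flag the existence of closest-return arcs at intermediate base points, a point the paper glosses over, and your fix by full-measure admissibility with small adjustments absorbed into $\gamma_0$ is reasonable.
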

\begin{proof}
Let $x\in M$ denote the initial point of $\bar \gamma$. Let $n\in \N$ denote the unique integer such that 
$\bar \gamma_n(x) \subset \bar \gamma$. We have
$$
 c  \lambda^n \leq \mathcal L^+  (\bar \gamma_n(x))  \leq \mathcal L^+ (\bar \gamma) \leq \mathcal L^+  (\bar \gamma_{n+1}(x)) \leq C \lambda^{n+1}\,.
$$
Let then $x_{n,1}=x$.  Let $y$ denote the endpoint of $\bar \gamma_n(x)$.  Let then $n'$ denote the largest 
integer such that $\bar \gamma_{n'}(y) \subset \bar \gamma \setminus \bar \gamma (x)$. If $n'=n$, we set $y =x_{n,2}$, otherwise if $n' <n$ we set $y= x_{n',1}$. We keep iterating this procedure. It is clear that 
$$
c  m_\ell \lambda^\ell \leq   C \lambda^{\ell+1} 
$$
otherwise the union of segments $\bar \gamma_\ell( x_{\ell,1}) \cup \dots \bar \gamma_\ell( x_{\ell,m_\ell})$ would
be covered by a segment $\bar \gamma_{\ell+1}( x_{\ell+1,m_{\ell+1}})$.  In particular, we have that 
$\mathcal L ^+ (\gamma_0) \leq  C\lambda/c$.

\end{proof} 

\begin{theorem}  
\label{thm:asymptotic} There exists  a map $\mathcal C^+: \Gamma^+ \to E^+$  on the set $\Gamma^+
$ of unstable curves with bounded range in $E^+\subset H^1(M, \C)$, such that for any unstable curve $\bar \gamma$ of unstable length $\mathcal L^+(\bar \gamma)>1$ we have
\begin{equation}
\label{eq:asymptotic}
\Vert  \bar \gamma -  \mathcal B^+ (  (A^\#)^{[\frac{\log (\mathcal L^+(\bar\gamma))} {h_{top}(A)}]} ( \mathcal C^+(\bar\gamma) )   ) \Vert_{-1} \leq C [\log (1+\mathcal L^+ (\bar \gamma))]^{\max(J^0_A,1)+1}\,.
\end{equation}
\end{theorem}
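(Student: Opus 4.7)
My plan is to reduce Theorem~\ref{thm:asymptotic} to Lemma~\ref{lemma:closed_currents} by closing up each closest-return unstable curve to a $1$-cycle via its short stable chord, applying the cohomological asymptotics to each cycle, and reassembling a general unstable arc from these building blocks via Lemma~\ref{lemma:decomposition}. The target class $\mathcal C^+(\bar\gamma)$ will be a geometrically convergent sum of closing classes appropriately shifted by $(A^\#)^{-(n-\ell)}$.

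The building block is the closest-return loop $\gamma_\ell(x) = A^\ell_\ast\gamma(A^{-\ell}x)$, whose cohomology class $[\gamma(A^{-\ell}x)]\in H^1(M,\C)$ is uniformly bounded because the pre-iterate $\gamma(A^{-\ell}x)$ has Margulis and Riemannian length in a fixed compact range. Lemma~\ref{lemma:closed_currents} then gives
\begin{equation*}
\Vert\gamma_\ell(x) - \mathcal B^+\bigl((A^\#)^\ell[\gamma(A^{-\ell}x)]^+\bigr)\Vert_{-1} \leq C\ell^{\max(J^0_A,1)}.
\end{equation*}
Since the residual stable chord $A^\ell_\ast J^s$ has Riemannian length $O(\lambda^{-\ell})$ and hence negligible $\Vert\cdot\Vert_{-1}$ norm, the estimate transfers to $\bar\gamma_\ell(x)$ with closing class $\mathcal C^+_\ell(x):=[\gamma(A^{-\ell}x)]^+\in E^+$ uniformly bounded in $\ell$ and $x$. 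For a general $\bar\gamma$ with $\mathcal L^+(\bar\gamma)\in[c\lambda^n, C\lambda^{n+1}]$, I would apply Lemma~\ref{lemma:decomposition} to write $\bar\gamma = \sum_{\ell=1}^n\sum_{m=1}^{m_\ell}\bar\gamma_\ell(x_{\ell,m}) + \gamma_0$ with $m_\ell = O(1)$ and $\mathcal L^+(\gamma_0) = O(1)$, and set
\begin{equation*}
\mathcal C^+(\bar\gamma) := \sum_{\ell=1}^n\sum_{m=1}^{m_\ell}(A^\#)^{-(n-\ell)}\mathcal C^+_\ell(x_{\ell,m})\in E^+,
\end{equation*}
so that $\mathcal B^+((A^\#)^n\mathcal C^+(\bar\gamma))$ equals, summand by summand, $\mathcal B^+((A^\#)^\ell\mathcal C^+_\ell(x_{\ell,m}))$.

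Because $(A^\#)^{-1}|_{E^+}$ is strictly contracting at rate $|\mu_k|^{-1}<1$ (up to polynomial Jordan corrections), the defining sum for $\mathcal C^+(\bar\gamma)$ converges absolutely and uniformly in $n$, which gives the bounded range. Summing the per-summand errors together with the bounded remainder $\gamma_0$ yields
\begin{equation*}
\sum_{\ell=1}^n m_\ell\cdot O(\ell^{\max(J^0_A,1)}) + O(1) = O\bigl(n^{\max(J^0_A,1)+1}\bigr),
\end{equation*}
which via $n\sim\log\mathcal L^+(\bar\gamma)/h_{top}(A)$ is the desired bound. The main obstacle I anticipate is the uniform control on the closing classes $[\gamma(A^{-\ell}x_{\ell,m})]$ across all the points produced by the decomposition: this requires verifying that the chopping procedure of Lemma~\ref{lemma:decomposition} yields genuine closest-return loops of length in $[c,C]$, which in turn rests on compactness of $M$ together with the H\"older comparability of the Margulis conditional arc length and the Riemannian arc length on stable leaves.
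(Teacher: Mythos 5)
Your proposal follows essentially the same route as the paper: close up the closest-return unstable pieces into $1$-cycles, invoke Lemma~\ref{lemma:closed_currents} on each, reassemble via Lemma~\ref{lemma:decomposition}, and define $\mathcal C^+(\bar\gamma)$ as a geometrically convergent sum of shifted closing classes. The only cosmetic discrepancy is that you normalize the shift by $(A^\#)^{-(n-\ell)}$ rather than by $(A^\#)^{-([\log\mathcal L^+(\bar\gamma)/h_{top}(A)]-\ell)}$ as required to match the exponent in formula~\eqref{eq:asymptotic}; since $n$ and $[\log\mathcal L^+(\bar\gamma)/h_{top}(A)]$ differ by a bounded amount this changes $\mathcal C^+$ only by conjugation with a bounded power of $A^\#$ and does not affect the bounded-range or error estimates.
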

\begin{proof}
For any $x, y\in M$, let $I_{x,y}$ denote a curve of bounded length joining $x$ to $y$. For any unstable
curve $\bar \gamma$ with endpoints $x$ and $y\in M$, let $\gamma = \bar \gamma \cup I_{x,y}$. 
There exists a current $u_\gamma$ of degree $0$ (and dimension $2$)  in the dual space of the space of
$2$-forms with $C^0$ coefficients
$$
\gamma = \mathcal B^+([\gamma]^+) +  \mathcal B^-([\gamma]^-)  + \mathcal B^0([\gamma]^0) 
+ du_\gamma\,.
$$
By the decomposition lemma we also have
$$
\bar \gamma = \sum_{\ell=1}^n \sum_{m=1}^{m_\ell} \bar \gamma_\ell( x_{\ell,m}) \,\, + \,\, 
\bar \gamma_0\,,
$$
with $ \bar \gamma_\ell( x_{\ell,m}) = A^\ell_\ast \left( \gamma ( A^{-\ell} (x_{\ell,m}) )       \right)$, hence 
there exists a constant $C>0$ such that 
$$
\Vert  \bar \gamma_\ell( x_{\ell,m}) -
 \mathcal B^+ \left( (A^\#)^\ell [ \gamma ( A^{-\ell} (x_{\ell,m})]^+  \right) \Vert_{-1} \leq 
 C \ell ^{\max(J^0_A,1)}  \,,
 $$
and by summation
$$
\Vert  \bar \gamma -
  \sum_{\ell=1}^n \sum_{m=1}^{m_\ell}  \mathcal B^+ \left( (A^\#)^\ell [ \gamma ( A^{-\ell} (x_{\ell,m})    ]^+  \right)     \Vert_{-1} \leq 
 C n^{\max(J^0_A,1)+1}\,.
$$
Let $\mathcal C^+(\bar\gamma) \in E^+$ denote the cohomology class
\begin{equation}
\label{eq:Cplus}
\mathcal C^+(\bar \gamma) := \sum_{\ell=1}^n\sum_{m=1}^{m_\ell}  (A^\#)^{-( [ \frac{\log (\mathcal L^+(\bar \gamma))} {h_{top}(A)}] -\ell)}  [ \gamma ( A^{-\ell} (x_{\ell,m}) )   ]^+  \,.
\end{equation}
Since by the decomposition lemma we have 
$$ n\leq  \frac{\log (\mathcal L^+(\bar \gamma)/c)} {\log \lambda} =  \frac{\log (\mathcal L^+(\bar\gamma)/c)} {h_{top}(A)}\,,  $$
and since $(A^\#)^{-1}$ is contracting on the unstable space $E^+$, by the decomposition lemma, it follows that
there exists a constant $C'>0$ such that
$$
\Vert  \mathcal C^+(\bar \gamma) \Vert \leq   C'  \,, \quad \text{ for all } \bar \gamma \in \Gamma^+\,.
$$
 The result follows.
\end{proof}

\begin{proof}[Proof of Theorem~\ref{thm:Deviations}] It follows from Theorem~\ref{thm:asymptotic}
by writing the asymptotics in formula \eqref{eq:asymptotic}  with respect to a Jordan basis of $A^\# \vert E^+$.
The lower bound on the coefficients along subsequences in formula~\eqref{eq:lower_bounds} holds by Lemma~\ref{lemma:closed_currents} along sequences of closest return leaves.  In fact, it is known that  for every projection given by  a Jordan basis of $A^\#$ in $H^1(M, \R)$ and  for all $x\in M\setminus \Sigma$ with infinite forward orbit, there exists a close return orbit $\gamma(x)$ with non-zero projection (see for instance \cite{Bu14}, Prop. 2.9, or \cite{DHL14}, \S~5.3). We recall below a proof of this statement for the
convenience of the reader.  

Let $I$ denote any stable arc. The return map of the unstable flow $h^X_\R$ to $I$ is an Interval Exchange Transformation  and that the flow $h^X_\R$ is a suspension of that Interval Exchange Transformation under a piece-wise constant roof function. 

It is well-known that the set of loops equal to the union of a first return orbit of $h^X_\R$ (to any given transverse stable arc $I$) with a transverse (stable) segment joining the endpoints
spans the relative homology group  $H_1(M \setminus \Sigma, \Z)$ (see for instance \cite{Yoc10}, \S 4.5). By Poincar\'e duality, it follows that, for every given Jordan projection (induced by a Jordan basis of the map $A^\#$ on $H^1(M , \R)$), there exists a subinterval  $J \subset I \subset M$ such that the cohomology class of the loop $\gamma_I(x)$, given by the union  of the return orbit of any point $x\in J$ with a transverse (stable) arc $I_x\subset I$, has a non-zero projection.  

Let now $x \in I \subset M\setminus \Sigma$ be an arbitrary point with  infinite forward orbit. Let  
$\bar {\gamma}'(x)$ denote the orbit arc until the first visit of the forward orbit $h^X_{\R^+}(x)$ to the subinterval
$J$ and let $\gamma'(x)$ denote the loop given by the union of $\bar {\gamma}'(x)$ with a transverse (stable) interval $I' \subset I$. Let $\bar {\gamma}''(x)$ denote  the smallest orbit arc  of the forward orbit $h^X_{\R^+}(x)$ which is strictly larger then $\bar {\gamma}'(x)$ and let $\gamma''(x)$ denote the the loop given by the union of $\bar {\gamma}''(x)$ with a transverse (stable) interval $I''\subset I$.  

{\it We claim that the given Jordan projection of either the cohomology class $[\gamma'(x)]\in H^1(M, \Z)$ of the loop $\gamma'(x)$ or cohomology class $[\gamma''(x)]\in H^1(M, \Z)$ of the loop $\gamma''(x)$ is non-zero.}

In fact, let $x'\in J \subset I$ denote the endpoint of the orbit arc $\bar {\gamma}'(x)$ and let $\gamma_I(x')$
denote the loop given by the union  of the return orbit of $x'\in J$ with a transverse (stable) arc $I_{x'}\subset I$.
On the one hand, by hypothesis we have that the given Jordan projection of the class $[\gamma_I(x')]$ is non-zero; on the other hand, by construction we have
$$
[\gamma''(x)] = [\gamma'(x)] +  [\gamma_I(x')]    \quad \text{ in } \,\,  H^1(M, \Z)\,,
$$
hence the claim follows. 

The lower bounds in the statement of Theorem~\ref{thm:Deviations} follow from the above claim on Jordan projections of orbit arcs and from Theorem~\ref{thm:asymptotic}.
\end{proof}

From the statement of Theorem~\ref{thm:Deviations} we derive:

\begin{proof}[Proof of Addendum~\ref{addendum:basic}]
Since the currents $B^\pm_1$ and $B^\pm_{i,j}$ are closed, it is enough to prove that, for every
vector field $Y^\pm$ tangent to the unstable/stable foliation $\mathcal F^\pm$, we have that 
$$
\imath_{Y^\pm} B^\pm_1 = \imath_{Y^\pm} B^\pm_{i,j} =0  \,, \quad \text{ for all }i\in \{2, \dots, k\}, j\in \{1, \dots, J_i\}\,.
$$
By Theorem~\ref{thm:Deviations} for any vector field $Y:=Y^+$ tangent to the unstable foliation, and for every
$2$-form $w$ of class $C^1$ we have, since $\int_{\gamma_{\mathcal L} (x)} \imath_{Y} w =0$,
\begin{equation}
\begin{aligned}
\vert \mathcal L B^+_1(\imath_{Y} w) -  &\sum_{i=2}^k \sum_{j=1}^{J_i} 
c_{i,j} (x, \mathcal L) B_{i, j} ^+(\imath_{Y} w)  (\log \mathcal L)^{j-1} \mathcal L^{ \frac{ \log \vert \mu_i\vert}{h_{top}(A)}}  \vert  \\  &\leq C \Vert \imath_{Y} w \Vert_{C^1(M)}  [\log (1+ \mathcal L) ]^{\max(J^0_A,1) +1}\,.
\end{aligned}
\end{equation}
The above inequality immediately implies that $(\imath_{Y} B^+_1)(w)=B^+_1(\imath_{Y} w) =0$, then by the lower bound on
coefficients $c_{i,j} (x, \mathcal L)$ given in Theorem~\ref{thm:Deviations}, it also follows by finite induction 
on $(i,j)$, with respect to the lexicographic order (such that $(i,j)<(i',j')$ iff $i<i'$ or $i=i'$ and $j>j'$)
that  
$$
(\imath_{Y} B^+_{i,j})(w)=B^+_{i,j}(\imath_{Y} w) =0\,, \quad \text{  for all }i\in \{2, \dots, k\}, j\in \{1, \dots, J_i\}\,.
$$ 
Since $w$ is an arbitrary $2$-form of class $C^1$, the Addendum is proved in the case of the
unstable foliation. The statement for the stable foliation follows by considering the case of the unstable foliation 
for the inverse map $A^{-1}$. 

\end{proof} 

Finally, we detail the construction of asymptotic functionals:

\begin{proof}[Proof of Theorem~\ref{thm:Functionals}]
Let $\bar \gamma$ be any unstable arc. We claim that the limit
$$
\beta^+ (\bar \gamma) := \lim_{k\to \infty}  A_\ast^{-k} \mathcal B^+ (  (A^\#)^{[\frac{\log (\mathcal L^+(A^k \bar\gamma))} {h_{top}(A)}]} ( \mathcal C^+(A^k \bar\gamma) )   )$$
exists. In fact, this claim is easily reduced to existence of the limit 
$$
{\mathcal C}_\infty (\bar \gamma):= \lim_{n\to \infty}  \mathcal C^+(A^k \bar\gamma).
$$
It follows from the definition of the map ${\mathcal C}^+ : \Gamma^+ \to E^+$ in formula~\eqref{eq:Cplus}
that there exists a sequence $\{ x_{\ell, m} \vert \ell \in \N, 1\leq m \leq m_\ell\} \subset \bar \gamma$ such that,
for all $k\in \N$, 
$$
\begin{aligned}
{\mathcal C}^+ (A^k \bar \gamma) &=  \sum_{\ell=1}^{n+k} \sum_{m=1}^{m_\ell}  (A^\#)^{-( [ \frac{\log (\mathcal L^+(A^k \bar \gamma))} {h_{top}(A)}] -\ell -k)}  [ \gamma ( A^{-\ell} (x_{\ell,m}) )   ]^+  \\
&= \sum_{\ell=1}^{n+k} \sum_{m=1}^{m_\ell}  (A^\#)^{-( [ \frac{\log (\mathcal L^+(\bar \gamma))} {h_{top}(A)}] -\ell)}  [ \gamma ( A^{-\ell} (x_{\ell,m}) )   ]^+ \,.
\end{aligned} 
$$
Since the cohomology classes $[\gamma ( A^{-\ell} (x_{\ell,m}) )]$ are uniformly bounded, the limit of the above sequence exists and it is equal to the sum of a convergent series, that is,
$$
{\mathcal C}_\infty (\bar \gamma):= \sum_{\ell=1}^{+\infty} \sum_{m=1}^{m_\ell}  (A^\#)^{-( [ \frac{\log (\mathcal L^+(\bar \gamma))} {h_{top}(A)}] -\ell)}  [ \gamma ( A^{-\ell} (x_{\ell,m}) )   ]^+ \,.
$$
The scaling property follows since the definition is well-posed, in fact
$$
\begin{aligned}
\beta^+ (A\gamma) &:= \lim_{k\to \infty}  A_\ast^{-k} \mathcal B^+ (  (A^\#)^{[\frac{\log (\mathcal L^+(A^{k+1} \bar\gamma))} {h_{top}(A)}]} ( \mathcal C^+(A^{k+1} \bar\gamma) )   ) \\ &= A_\ast 
 \lim_{k\to \infty}  A_\ast^{-(k+1)} \mathcal B^+ (  (A^\#)^{[\frac{\log (\mathcal L^+(A^{k+1} \bar\gamma))} {h_{top}(A)}]} ( \mathcal C^+(A^{k+1} \bar\gamma) )   ) =  A_\ast  \beta^+ (\gamma)\,.
\end{aligned}
$$
It is clear by the definition that the map ${\mathcal C}_\infty$ is invariant under (regular) stable holonomies
of unstable arcs. As a consequence, it is possible to extend the functional ${\mathcal B}^+$ to any rectifiable arc by decomposition into finally many paths, which are equivalent to unstable paths under stable holonomies. 

Finally, it follows immediately from Theorem~\ref{thm:asymptotic} that  there exists a constant $C>0$ such that,for any unstable arc $\bar \gamma$, we have
$$
\Vert  \bar \gamma -  \beta^+ (\bar\gamma)   ) \Vert_{-1} \leq C  [\log (1+\mathcal L^+ (\bar \gamma))]^{\max(J^0_A,1)+1}\,.
$$
By the above-mentioned decomposition, we then derive that, for any $\gamma \in \Gamma_r$, 
$$
\Vert  \gamma -  \beta^+ (\gamma)   ) \Vert_{-1} \leq C (1+  \mathcal L^- (\gamma) )  [\log (1+\mathcal L^+ (\gamma))]^{\max(J^0_A,1)+1}\,,
$$
as stated.
\end{proof}

\section{Deviations of ergodic integrals and Ruelle-Pollicott asymptotics}
\label{sec:RAsymptotics}

In this final section, we complete the proofs of Corollary~\ref{cor:Deviations} on deviations of ergodic integrals for unstable vector fields and Corollary~\ref{cor:RAsymptotics} on the Ruelle-Pollicott asymptotics.

\smallskip
We first verify our claim that, as a consequence of Addendum~\ref{addendum:basic}, the measure 
$D_1^X =B_1^+\wedge \hat X$ and, for all $i\in \{2, \dots, k\}$ and all $j\in\{1, \dots, J_i\}$,  the distributions $D_{i,j}^X =B_{i,j} ^+\wedge \hat X$ are $X$-invariant. Let $B$ any basic current of degree and dimension~$1$ for the unstable foliation on $M\setminus \Sigma$. The current $B \wedge \hat X$ is a current of degree~$2$ and dimension~$0$.  Since the contraction operator $\imath_X$ is surjective onto the space of functions, it is
enough to prove that $\imath_X \mathcal L_X (B \wedge \hat X)=0$, as the latter identity then implies 
$\mathcal L_X (B \wedge \hat X)=0$. Indeed, since $\imath_X B=dB=0$ and $\imath_X \hat X=1$ we have
$$
\imath_X \mathcal L_X (B \wedge \hat X) = \imath_X  d  \imath_X (B \wedge \hat X) = 
- \imath_X  d  B =0\,.
$$
Thus our claim is proved. We then proof our result on deviations of ergodic averages.

\begin{proof}[Proof of Corollary~\ref{cor:Deviations}]
For all $x\in M\setminus \Sigma$, let $\gamma^X_T(x)$ denote the orbit of the unstable vector flow $h^X_\R$ on $M\setminus \Sigma$ (which we assume defined).  Let $\mathcal L_T(x)$ denote the Margulis unstable measure
of the orbit $\gamma^X_T(x)$, that is, according to the notations in the statement of Theorem~\ref{thm:Deviations} we have 
$$
\gamma^X_T(x) = \gamma_{{\mathcal L}_T(x)} (x) \,.
$$
For any $f \in C^1(M)$, let $\eta_f :=  f \hat X$. By Theorem~\ref{thm:Deviations}, formula \eqref{eq:deviations} and our definitions, since
$$
\int_{\gamma_{\mathcal L}(x)} \eta_f = \int_0^T   f \circ h^X_t (x) dt \,,
$$
we have the expansion (for some constant $C_X>0$):
\begin{multline}
\label{eq:deviations_1}
\vert  \int_0^T   f \circ h^X_t (x) dt  - \mathcal L_T(x)  D^X_1(f)  \\ -\sum_{i=2}^k \sum_{j=1}^{J_i} 
c_{i,j} (x, \mathcal L_T(x)) D_{i, j} ^X(f)  (\log \mathcal L_T(x))^{j-1} \mathcal L_T(x)^{ \frac{ \log \vert \mu_i\vert}{h_{top}(A)}}  \vert  \\  \leq C_X \Vert f \Vert_{C^1(M)}  [\log (1+ \mathcal L_T(x)) ]^{\max(J^0_A,1) +1}\,.
\end{multline}
For the case of the constant function $f=1$ we have that, for some constant $C'_X>0$,
\begin{multline}
\label{eq:deviations_2}
\vert T  - \mathcal L_T(x)  D^X_1(1) -  \sum_{i=2}^k \sum_{j=1}^{J_i} 
c_{i,j} (x, \mathcal L_T(x)) D_{i, j} ^X(1)  (\log \mathcal L_T(x))^{j-1} \mathcal L_T(x)^{ \frac{ \log \vert \mu_i\vert}{h_{top}(A)}} \vert  \\  \leq C'_X  [\log (1+ \mathcal L_T(x)) ]^{\max(J^0_A,1) +1}\,.
\end{multline}
The statement then follows from formulas~\eqref{eq:deviations_1} and~\eqref{eq:deviations_2}. Indeed, by
formula~\eqref{eq:deviations_2} the ratio  $\mathcal L_T(x)/T$ is uniformly bounded from above and below,
so that the coefficients
$$
c^X_{ij} (x, T):= c_{i,j} (x, \mathcal L_T(x))   ( \frac{\log (\frac{\mathcal L_T(x)}{T})}{\log T}  + 1)^{j-1}( \frac{\mathcal L_T(x)}{T})^{ \frac{ \log \vert \mu_i\vert}{h_{top}(A)}} 
$$
are uniformly bounded from above,  and from formula~\eqref{eq:deviations_1}, for any function $f$ of class $C^1$, of zero average with respect to the invariant measure $\mu_X$, we have  
\begin{multline}
\vert  \int_0^T   f \circ h^X_t (x) dt   -\sum_{i=2}^k \sum_{j=1}^{J_i} 
c^X_{i,j} (x,T)D_{i, j} ^X(f) (\log T)^{j-1} T^{ \frac{ \log \vert \mu_i\vert}{h_{top}(A)}}  \vert  \\ \leq C_X \Vert f \Vert_{C^1(M)}  [\log (1+ \frac{\mathcal L_T(x)}{T} T) ]^{\max(J^0_A,1) +1}\,.
\end{multline}
For general functions of class $C^1$ the conclusion follows by considering their projections onto the subspace
of functions of zero average.

\end{proof}

\begin{proof}[Proof of Corollary~\ref{cor:RAsymptotics}]
Since $A$ is of class $C^r$ with $r>2$, its unstable foliation is of class $C^{1+\alpha}$ for $\alpha>0$.
Let $\phi^+: M\times \R \to M$ the flow of class $C^{1+\alpha}$ defined by following the (one-dimensional) unstable leaves with the speed determined by the Margulis measure. In more precise terms, given an orientation of the unstable leaves, for every $(x,t) \in M\times \R$ the point $\phi^+_t(x)$ is the unique point which belongs
to the unstable leaf $\mathcal F^+(x)$ of $x$, such that $\mathcal L^+(x, \phi^+_t(x)) =\vert t \vert$  with
$[x, \phi ^+_t(x)]$ oriented in positive or negative direction depending on the sign of $t \not =0$. 

By definition the flow $\phi^+$ preserves the Margulis measure, so that we have
$$
\langle f \circ A^n , g \rangle =  \frac{1}{\sigma} \int_0^\sigma \langle f \circ A^n \circ \phi^+_t , g \circ \phi^+_t\rangle
dt \,.
$$
By integration by parts we have the formula
\begin{equation}
\label{eq:correlations}
\begin{aligned}
\int_0^\sigma \langle f \circ A^n \circ \phi^+_t , g \circ \phi^+_t\rangle
dt &= \langle   \int_0^\sigma f \circ A^n \circ \phi^+_t  dt, g \circ \phi^+_\sigma \rangle \\ & -
 \int_0^t  \langle \int_0^s f \circ A^n \circ \phi^+_\tau d\tau ,  \frac{d}{ds} (g \circ \phi^+_s)\rangle ds
\end{aligned}
\end{equation}
Since the unstable curves are parametrized by the Margulis measure, by change of variables we have
\begin{equation}
\label{eq:change_variables}
\int_0^s f \circ A^n \circ \phi^+_\tau d\tau =  \frac{1}{\lambda^n} \int_0^{\lambda^n s}  
f  \circ \phi^+_\tau \circ A^n d\tau \,.
\end{equation}
Let then $\eta^+$ denote a $1$-form of class $C^{1+\alpha}$ such that $\eta^+ (\frac{d}{dt} \phi^+_t ) \equiv 1$.
The $1$-form $\eta^+$ restricts to the oriented Margulis measure on unstable leaves and can be taken to have kernel tangent to the stable foliation. By this definition we have, for all $x\in M$, $n\in \N$ and $s\in \R$, 
and for all $f\in C^1(M)$, 
$$
\int_0^{\lambda^n s}  f  \circ \phi^+_\tau \circ A^n(x) d\tau = \int_{ \gamma_{\lambda^n s}(A^n(x))}  f \eta^+ \,.
$$
By Theorem~\ref{thm:Deviations} and by formula~\eqref{eq:change_variables} we have
$$
\begin{aligned}
\vert \int_0^{\lambda^n s} f \circ A^n \circ \phi^+_\tau(x) d\tau  &- \frac{1}{\lambda^n}
\sum_{i=1}^k \sum_{j=1}^{J_i} c_{i,j}(x, \lambda^n s)  \log (\lambda^n s)^{j-1}
(\lambda^n s) ^{ \frac{ \log \vert \mu_i\vert}{h_{top}(A)}}  B_{i,j}^+(f \eta^+) \vert  \\ & \leq C \Vert f \Vert_{C^1(M)} 
\frac{ [\log (1+\lambda^n s) ]^{\max(J^0_A,1)+1}} {\lambda^n} \,.
\end{aligned} 
$$
We can assume that the function $f$ has zero average, hence we have 
$$B_1^+(f \eta^+)= \int_M  f  \mathcal M^- \otimes \eta^+=   \int_M  f d  \mathcal M =0\,,$$
 so that 
by inserting the above asymptotics in formula~\eqref{eq:correlations} we derive the desired  expansion
for correlations.

\end{proof}

\end{document}